\theoremstyle{definition}
\newtheorem{remark}{Remark}[section]
\theoremstyle{plain} 
\newtheorem{lemma}{Lemma}[section]
\newtheorem{corollary}{Corollary}[section]
\newtheorem{proposition}{Proposition}[section]
\DeclareMathOperator{\id}{id}
\DeclareMathOperator{\SL}{SL}
\newcommand{\Bers}{\textup{Bers}}
\newcommand{\Bound}{B^{1}(\Gamma,\HON)}
\newcommand{\Boundtwo}{B^{1}(\Gamma,\HOtwo)}  
\newcommand{\C}{\mathbb{C}}
\newcommand{\calS}{\mathcal{S}}
\newcommand{\Can}{\textup{Can}}
\newcommand{\Cg}{\mathfrak{C}_{g}}
\newcommand{\Cgn}{\mathfrak{C}_{g,n}}
\newcommand{\Chat}{\widehat{\C}}
\newcommand{\COH}{H^{1}(\Gamma,\HON)}
\newcommand{\COHtwo}{H^{1}(\Gamma,\HOtwo)}
\newcommand{\Con}[1]{\mathcal{C}_{#1}}
\newcommand{\D}{\mathcal{D}}
\newcommand{\delCg}{\nabla_{\hspace{-1 mm}\Cg}}
\newcommand{\delCgn}{\nabla_{\hspace{-1 mm}\Cgn}}
\newcommand{\delCgcheck}{\widecheck{\nabla}_{\hspace{-1 mm}\Cg}}
\newcommand{\delCgncheck}{\widecheck{\nabla}_{\hspace{-1 mm}\Cgn}}
\newcommand{\delCgtilde}{\widetilde{\nabla}_{\hspace{-1 mm}\Cg}}
\newcommand{\delCgntilde}{\widetilde{\nabla}_{\hspace{-1 mm}\Cgn}}
\newcommand{\delMg}{\nabla_{\hspace{-1 mm}\Mg}}
\newcommand{\delMgn}{\nabla_{\hspace{-1 mm}\Mgn}}
\newcommand{\dyb}{\partial_{\ybar}}
\newcommand{\dz}{\partial_{z}}
\newcommand{\dzb}{\partial_{\zbar}}
\newcommand{\Ei}{Z^{1}(\Gamma,\HON)}
\newcommand{\Eitwo}{Z^{1}(\Gamma,\HOtwo)}
\newcommand{\FN}{\mathcal{F}_{N}}
\newcommand{\GEM}{\Psi}
\newcommand{\gem}{\psi}
\newcommand{\GEMBers}{\Psi^{\Bers}}
\newcommand{\half}{\frac{1}{2}}
\newcommand{\HgN}{{\mathcal H}^{(g)}_{N}}
\newcommand{\Hgtwo}{{\mathcal H}^{(g)}_{2}}
\newcommand{\HH}{\mathbb{H}}
\newcommand{\HON}{{\mathcal H}^{(0)}_{1-N}}
\newcommand{\HOtwo}{{\mathcal H}^{(0)}_{-1}}
\newcommand{\I}{\mathcal{I}}
\newcommand{\im}{\textup{i}}
\newcommand{\Ip}{\mathcal{I}_{+}}
\newcommand{\J}{\mathcal{J}}
\newcommand{\Mg}{{\mathcal M}_{g}}
\newcommand{\Mgn}{{\mathcal M}_{g,n}}
\newcommand{\Omo}{\Omega_{0}(\Gamma)}
\newcommand{\PM}{R} 
\newcommand{\R}{\mathbb{R}}
\newcommand{\Schg}{\mathfrak{S}_{g}}
\newcommand{\Schgn}{\mathfrak{S}_{g,n}}
\newcommand{\Sg}{\mathcal{S}_{g}}
\newcommand{\Sgn}{\mathcal{S}_{g,n}}
\newcommand{\slLie}{\mathfrak{sl}}
\newcommand{\tpi}{2\pi \im} 
\newcommand{\ybar}{\overline{y}}
\newcommand{\yD}{{\lambda}y}
\newcommand{\Z}{\mathbb{Z}}
\newcommand{\zbar}{\overline{z}}
\title{Meromorphic Extensions of Green's Functions on a Riemann Surface}
\begin{document}
	\author{Michael P. Tuite}
	\address{School of Mathematics, Statistics and Applied Mathematics\\ 
		National University of Ireland Galway, Galway, Ireland}
	\email{michael.tuite@nuigalway.ie}
	\maketitle
	\begin{abstract}
For a Riemann surface of genus $g\ge 2$ there exists a unique Green's function $G_{N}(x,y)$ which transforms as a weight $N\ge 2$  form in $x$ and a weight $1-N$  form in $y$ and is meromorphic in $x$, with a unique simple pole at $x=y$, but is not meromorphic in $y$.  For a Schottky uniformized Riemann surface we consider meromorphic extensions of $G_{N}(x,y)$ called Green's Functions with Extended Meromorphicity or  GEM forms. GEM forms are meromorphic in both $x$ and $y$ with a unique simple pole at $x=y$,   transform as weight $N\ge 2$  forms in $x$ but as  weight $1-N$ quasiperiodic forms in $y$. We give a reformulation of the bijective Bers map and describe a choice of GEM form with an associated canonical basis of normalized holomorphic $N$-forms. We describe an  explicit differential operator constructed from $N=2$ GEM forms giving the variation with respect to moduli space parameters of a punctured Riemann surface. We also describe  a  new expression for the inverse Bers map.  
	\end{abstract}
	\section{Introduction} 	
For every Riemann surface of genus $g\ge 2$ there exists a unique Green's function $G_{N}(x,y)$ with the following properties \cite{Ma,McIT}: 
$G_{N}(x,y)$ transforms as a weight $N\ge 2$ differential form in $x$, as a weight $1-N$ differential form in $y$ and is meromorphic in $x$ with a unique simple pole at $x=y$ but is not meromorphic in $y$ where $\frac{\partial  }{\partial \ybar}  G_{N}(x,y)$ has a specific form.
Using a Schottky uniformization of the Riemann surface, the definition of  $G_{N}(x,y)$ utilizes a  Poincar\'e series $\GEMBers_{N}(x,y)$  introduced by Bers \cite{Be1,Be2} as a means of constructing a Bers potential for each holomorphic weight $N$ differential form. 
Unlike the Green's function, $\GEMBers_{N}(x,y)$ is meromorphic in both $x$ and $y$ with a unique simple pole at $x=y$ and transforms as a weight $N$ differential form in $x$ but transforms as a quasiperiodic differential form of weight $1-N$  in $y$. $\GEMBers_{N}(x,y)$ is an example  of what we call  a Green's Function with Extended Meromorphicity or GEM form, denoted by $\GEM_{N}(x,y)$. GEM forms can be viewed as a generalization of the classical differential of the third kind $\omega_{y-0}(x)$ which is meromorphic in both $x$ and $y$ and is a weight $1$ differential form in $x$ and a quasiperiodic differential form of weight $0$  in $y$ but unlike a GEM form, possesses two simple poles at $x=y$ and  $x=0$ e.g. \cite{Mu,Fa}. Here we develop some general properties of GEM forms which have important intrinsic meaning beyond their use for constructing  Bers potentials or the Green's function. Our primary motivation in studying GEM forms is that they play a pivotal role in the description of vertex operator algebras on a  Riemann surface  \cite{TW} but we believe that there may be wider interest in our results beyond that important application.

Our results in Section~2 include a new reformulation and proof of the original Bers map \cite{Be1} by describing a bijective  map between the space of Bers potentials and Eichler cocycles. We discuss a canonical choice of GEM form associated with a canonical normalized basis of holomorphic $N$-differentials which generalizes the canonical basis of holomorphic 1-differentials associated with $\omega_{y-0}(x)$. 
In Section~3 we discuss the geometrical meaning of $N=2$ GEM forms and their relation to the $3g-3$ dimensional space of holomorphic quadratic differentials. In particular, employing a canonical parameterization of Schottky space, we explicitly construct a new canonical differential operator (partially anticipated in ref.~\cite{O}) employing $\GEM_{2}(x,y)$ which gives the variation with respect to moduli space parameters of a punctured Riemann surface.  In Section~4 the general relationship between $\GEM_{N}(x,y)$ and the Green's function  $G_{N}(x,y)$ is further developed and we describe the inverse Bers map giving the Bers potential for a given choice of Eichler cocycle by means of a new expression involving  $G_{N}(x,y)$.  

\noindent \textbf{Acknowledgements.} 
I wish to thank Michael Flattery, Tom Gilroy and Michael Welby for helpful comments and suggestions.

\section{The Bers Map and GEM forms}
\subsection{The Schottky uniformization of a Riemann surface}
We briefly review the  construction of a genus $g$ Riemann surface $\Sg$ using the Schottky uniformization where we sew $g$ handles to the Riemann sphere $\calS_{0}\cong\Chat:=\C\cup{\infty}$ e.g. \cite{Fo,Bo}. Every Riemann surface can be Schottky uniformized \cite{Be3}.
Let $\{\Con{\pm a}\}$, where $a\in\Ip=\{1,\ldots, g\}$, denote a set of $2g$ non-intersecting Jordan curves  in $\C$.
Identify $z'\in \Con{-a}$ with $z\in  \Con{a}$ via the Schottky sewing cross ratio relation
\begin{align}
\label{eq:Schottkysew}
\frac{z'-W_{-a}}{z'-W_{a}}\,\frac{z-W_{a}}{z-W_{-a}}=q_{a},\quad a\in\Ip,
\end{align}
for complex $q_{a}$ with $0<|q_{a}|<1$ and some $W_{\pm a}\in\Chat$. Thus  $z'=\gamma_{a}z$  for $a\in\Ip$ for  M\"obius transformation generated by $\gamma_{a}\in \SL_{2}(\C)$  where
\begin{align}\label{eq:gammaa}
\gamma_{a}:=\sigma_{a}^{-1}
\begin{pmatrix}
q_{a}^{1/2} &0\\
0 &q_{a}^{-1/2}
\end{pmatrix}
\sigma_{a},\quad 
\sigma_{a}=(W_{-a}-W_{a})^{-1/2}\begin{pmatrix}
1 & -W_{-a}\\
1 & -W_{a}
\end{pmatrix}.
\end{align}
$\gamma_{a}$ is loxodromic  with attracting fixed point $W_{-a}$ and repelling fixed point $W_{a}$.

The marked Schottky group $\Gamma\subset \SL_{2}(\C)$  is the free discrete group of M\"obius transformations generated by $\gamma_{a}$. 
Let $\Lambda(\Gamma)$ denote the  limit set.
Then  $\Sg\simeq \Omo/\Gamma$, a Riemann surface of genus $g$. 
We let $\D\subset\Chat$  denote the standard connected fundamental region with oriented boundary curves $\Con{a}$. 
We further identify the standard homology cycle $\alpha_{a}$  with $ \Con{-a}$ and the cycle $\beta_{a}$ with a path connecting  $z\in  \Con{a}$ to $z'=\gamma_{a}z\in  \Con{-a}$.

Define $\gamma_{-a}=\gamma_{a}^{-1} $ so that $\gamma_{a} \Con{a}= -\Con{-a}$ for all $a\in\I=\{\pm 1, \ldots, \pm g\}$. 
Let $
w_{a}:=\gamma_{-a}.\infty$ 
 so that $
\frac{w_{a}-W_{a}}{w_{a}-W_{-a}}=q_{a}
$
which implies
\begin{align}
\label{eq:wadef}
w_{a} ={\frac {W_{{a}}-q_{{a}}W_{{-a}}}{1-q_{{a}}}},
\end{align} 
for all $a\in\I$.
We therefore have
\begin{align}
\label{eq:gamaz}
\gamma_{a}z = w_{-a}+\frac{\rho_{a}}{z-w_{a}},
\end{align}  
where $\rho_{a}=\rho_{-a}$ is 
determined from the condition $\gamma_{a}W_{a}=W_{a}$ to be
\begin{align}
\label{eq:rhoadef}
\rho_{a}=-{\frac {q_{{a}} \left( W_{{a}}-W_{{-a}} \right) ^{2}}{\left(1- q_{{a}} \right) ^{2}}}.
\end{align} 
Hence \eqref{eq:Schottkysew} can be written in the more convenient form: 
\begin{align}
\label{eq:Cansew}
(z'-w_{-a})(z-w_{a})=\rho_{a}.
\end{align}
We may  choose the Jordan curve $\Con{a}$ to be the isometric circle\footnote{The isometric circle for $\left( \begin{smallmatrix}
		a&b\\ c&d
	\end{smallmatrix} \right)\in \SL_{2}(\C)$ is given by $|cz+d|=1$ \cite{Fo}.}
of $\gamma_{a}$
of radius $|\rho_{a}|^{\half}$ centred at $w_{a}$. 
We note that the interior/exterior of the disk $\Delta_{a}=\{z:|z|\le |\rho_{a}|^{\half}\}$ is mapped by $\gamma_{a}$ to the exterior/interior of $\Delta_{-a}$ since
$|\gamma_{a}z-w_{-a}||z-w_{a}|= |\rho_{a}|$.
Furthermore, the fixed point $W_{a}\in\Delta_{a}$.
\begin{center}
	\begin{picture}(260,100)

	\put(90,50){\circle{40}}
	\put(90,50){\makebox(0,0){$\boldsymbol{\cdot}$}}
	
	\put(90,42){\makebox(0,0){$w_{a}$}}
	
\put(85,65){\makebox(0,0){$\boldsymbol{\cdot}$}}
\put(70,75){\makebox(0,0){$W_{a}$}}
	
	\put(90,30){\vector(1,0){0}}
	\put(90,20){\makebox(0,0){$\Con{a}$}}
	
	\put(90,50){\circle{40}}
	
	
	\put(160,50){\circle{40}}
	\put(160,50){\makebox(0,0){$\boldsymbol{\cdot}$}}
	\put(160,42){\makebox(0,0){$w_{-a}$}}

	\put(165,65){\makebox(0,0){$\boldsymbol{\cdot}$}}
    \put(170,75){\makebox(0,0){$W_{-a}$}}
	
	\put(160,30){\vector(1,0){0}}
	\put(160,20){\makebox(0,0){$\Con{-a}$}}
	
		\put(20,50){\qbezier(90,0)(105,-10)(120,0)}%
		\put(127,45){\vector(1,0){0}}
		\put(124,35){\makebox(0,0){$\gamma_{a}$}}
		
		
		
%
	
	\end{picture}
	
	{\small Fig.~1 Isometric Schottky Circles}
\end{center}
We define the space of Schottky parameters  $\Cg \subset \C^{3g}$ by
\begin{align}
\label{eq:Cfrakg}
\Cg:=\left\{ (w_{1},w_{-1},\rho_{1},\ldots,w_{g},w_{-g},\rho_{g}): |w_{a}-w_{b}|>|\rho_{a}|^{\frac{1}{2}}+|\rho_{b}|^{\frac{1}{2}}\; \forall \;a\neq b\right\},
\end{align}
where the condition follows from  $\Delta_{a}\cap\Delta_{b}=\emptyset$ for $a\neq b$. 
The cross ratio \eqref{eq:Schottkysew} is M\"obius  invariant with $(z,z',W_{a},q_{a})\mapsto(\gamma z,\gamma z',\gamma W_{a},q_{a})$ for $\gamma =\left(\begin{smallmatrix}A&B\\C&D\end{smallmatrix}\right)
\in\SL_{2}(\C)$ 
 giving the following
$\SL_{2}(\C)$ action on $\Cg$
\begin{align}
	\label{eq:Mobwrhoa}
	\gamma:(w_{a},\rho_{a})\mapsto & 
\left(	\frac { \left( Aw_{a}+B \right)  \left( Cw_{-a}+D \right) -\rho_{a}
		\,AC}{ \left( Cw_{a}+D \right)  \left( Cw_{-a}+D \right) -\rho_{a}\,{
			C}^{2}},
	{\frac {\rho_{a}}{ \left(  \left( Cw_{a}+D \right)  \left( Cw_{-a}+D
			\right) -\rho_{a}\,{C}^{2} \right) ^{2}}}\right).
\end{align}
We define Schottky space as $\Schg=\Cg/\SL_{2}(\C)$. $\Schg$ is a covering space for the $3g-3$ dimensional moduli space $\mathcal{M}_{g}$ of genus $g$ Riemann surfaces e.g. \cite{Be3}.

\subsection{Holomorphic  differentials ${\mathcal H}^{(g)}_{N}$}
Let ${\mathcal A} _{m,n}$ for $m,n\in\Z$ denote the vector space of smooth differentials 
of the form $\Phi(z)=\phi(z) dz^{m}d\zbar^{n}$ for local coordinate $z$ on $\Sg$ e.g.
the Poincar\'e  metric\footnote{$R$ is induced from the Poincar\'e metric $y^{-2}d\zeta d\overline{\zeta}$ on  $\HH=\{\zeta=x+ \im\,y|x,y\in\R,\, y>0\}$ by uniformizing $\Sg$ as a quotient of $\HH$ by an appropriate Fuchsian subgroup of $ \SL_{2}(\R)$.} 
$\PM(z)=\rho(z)dz d\zbar \in{\mathcal A} _{1,1}$ for real positive $\rho(z)$. $\PM$ determines the  positive definite Petersson inner product  for $\Phi,\Theta \in {\mathcal A} _{m,n}$ 
\begin{align}
\label{eq:Pprod}
\langle \Phi,\Theta \rangle   
:= \iint_{\Sg}\Phi\,
\overline{\Theta}\,\PM^{-m-n}\omega, 
\end{align} 
for real volume form
\begin{align}
\label{eq:dmu}
\omega(z) :=\rho(z)d^{2}z,
\end{align}
with $d^{2}z:=\frac{\im}{2} dz\wedge d\zbar$. 
Let $\mathfrak{H}^{(g)}_{m,n}$ denote the $L^2$-closure of ${\mathcal A} _{m,n}$ with respect to the Petersson product and let $\mathfrak{H}^{(g)}_{n}=\mathfrak{H}^{(g)}_{n,0}$.
Lastly, let ${\mathcal H}^{(g)}_{N}\subset \mathfrak{H}^{(g)}_{N}$ denote the space of genus $g$ \emph{holomorphic} $N$-differentials where $N$ is  referred to as the weight.   
The Riemann-Roch theorem (e.g. \cite{FK,Bo}) determines $d_{N}=\dim{\mathcal H}^{(g)}_{N}$ as follows:  
\begin{center}
	\begin{tabular}{|c|c|c|}
		\hline
		genus $g$  & weight $N$ & dimension $ d_{N}$ \\
		\hline
		$g=0$ & $N\le 0$ & $1-2N$\\
	 & $N> 0$ & $0$ \\
	 \hline
	$g=1$ & $N\in\Z$ & $1$ \\
	 \hline
	$g\ge 2$ & $N< 0$ & $0$ \\
	 & $N=0$ & $1$ \\
	 & $N=1$ & $g$  \\
	 & $N\ge 2$ & $(g-1)(2N-1)$ \\
	 \hline
	\end{tabular}
~\vskip 0.2 cm~
\end{center} 
In the Schottky uniformization, $\Phi(z)=\phi(z) dz^{m}d\zbar^{n}\in {\mathcal A} _{m,n}$
for $z\in\Omo$ satisfies
\begin{align*}
\Phi|_{\gamma}=\Phi,
\end{align*}
for all $\gamma\in \Gamma$ where $\Phi|_{\gamma}(z):=\phi(\gamma z) d(\gamma z)^md(\overline{\gamma z})^n$ and the Petersson product is expressed as an integral over 
the Schottky fundamental region $\D$ e.g. \cite{McI, McIT}.

\subsection{Bers potentials and GEM forms} 
We review the relationship between $\HON$ and $\HgN$ in the Schottky scheme for all $N\ge 2$  and $g\ge 2$ as developed by Bers \cite{Be1,Be2}.
 Let $\Pi_{k}$ denote the $k+1$ dimensional space of complex polynomials of degree $\le k$.
$\HON$ consists of elements
 $P(z)=p(z)dz^{1-N}$ for  $z\in \Chat$ and $p\in \Pi_{2N-2}$.
There is a natural M\"obius action on $\HON$ given by 
\begin{align}
\label{eq:MobPi}
P|_\gamma (z):=P (\gamma z),
\end{align}
for $\gamma\in\SL_{2}(\C)$ with $P|_{\gamma\lambda}=P|_{\gamma}|_{\lambda}$ for all $\gamma,\lambda\in\SL_{2}(\C)$.

Let $\Gamma$ be a Schottky group for a Riemann surface of genus $g\ge 2$.
Let $\Ei$ denote the vector space of Eichler 1-cocycles\footnote{We define  a 1-cocycle  as a mapping to $\HON$ rather than to $\Pi_{2N-2}$ as in refs.~\cite{Be1,G}.} for $\Gamma$ given by mappings of the form $\Xi:\Gamma\rightarrow \HON$ such that for all $\gamma,\lambda\in\Gamma$
\begin{align}
\label{eq:cocycle}
\Xi[\gamma\lambda]=\Xi[\gamma]|_{\lambda}+\Xi[\lambda],
\end{align} 
for M\"obius action \eqref{eq:MobPi}. We note that \eqref{eq:cocycle} implies $\Xi[\id]=0$ so that 
$\Xi[\gamma]=-\Xi[\gamma^{-1}]|_{\gamma}$.

Let $\Bound\subset \Ei$ denote the  space of coboundaries  
$\Xi_{P}:\Gamma\rightarrow \HON$ for $P\in \HON$  given by
\begin{align}
\label{eq:cobound}
\Xi_{P}[\gamma]:=P|_{\gamma}-P.
\end{align}
$\Xi_{P}$ is a 1-cocycle since $P|_{\gamma}|_{\lambda}-P= \left(P|_{\gamma}-P\right)|_{\lambda}+P|_{\lambda}-P$. It is easy to show that \cite{Be1}  
\begin{lemma}\label{lem:dimcobound}
$\Bound\simeq \HON$ as vector spaces.
\end{lemma}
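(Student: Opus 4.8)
First I would use that $P\mapsto\Xi_{P}$ of \eqref{eq:cobound} is a \emph{linear} map onto $\Bound$ (surjectivity holds by definition of $\Bound$ as the set of such coboundaries), so the claim $\Bound\simeq\HON$ reduces to proving this map injective. By the homomorphism property $P|_{\gamma\lambda}=P|_{\gamma}|_{\lambda}$ of the action \eqref{eq:MobPi}, an element $P$ lies in the kernel, i.e. $\Xi_{P}[\gamma]=P|_{\gamma}-P=0$ for all $\gamma\in\Gamma$, exactly when $P|_{\gamma_{a}}=P$ for every generator $\gamma_{a}$, $a\in\Ip$. Thus I would show that the only $\Gamma$-invariant element of $\HON$ is $P=0$.

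Next I would analyse a single generator. Writing $\gamma_{a}=\sigma_{a}^{-1}\,\mathrm{diag}(q_{a}^{1/2},q_{a}^{-1/2})\,\sigma_{a}$ as in \eqref{eq:gammaa}, the operator $P\mapsto P|_{\gamma_{a}}$ on $\HON$ is conjugate, via $P\mapsto P|_{\sigma_{a}}$, to the operator induced by the diagonal factor. In the coordinate $w=\sigma_{a}z$ that diagonal operator sends $w^{k}dw^{1-N}$ to $q_{a}^{\,k+1-N}w^{k}dw^{1-N}$, so the $2N-1$ monomials ($0\le k\le 2N-2$) are eigenvectors with the distinct eigenvalues $q_{a}^{\,j}$, $j=1-N,\ldots,N-1$. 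Since $0<|q_{a}|<1$ no power $q_{a}^{\,j}$ except $q_{a}^{0}$ equals $1$, so exactly one eigenvalue is $1$, at $j=0$ (i.e. $k=N-1$), and the $\gamma_{a}$-invariant subspace of $\HON$ is one dimensional. Transforming back through $w=\frac{z-W_{-a}}{z-W_{a}}$, it is spanned by $(z-W_{a})^{N-1}(z-W_{-a})^{N-1}dz^{1-N}$, the unique line in $\HON$ whose polynomial part vanishes to order $N-1$ at each fixed point of $\gamma_{a}$.

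Finally I would combine two generators, using $g\ge 2$ to pick distinct $a,b\in\Ip$. A $\Gamma$-invariant $P$ would have to be proportional simultaneously to $(z-W_{a})^{N-1}(z-W_{-a})^{N-1}$ and to $(z-W_{b})^{N-1}(z-W_{-b})^{N-1}$. But the four fixed points $W_{\pm a},W_{\pm b}$ lie in the pairwise disjoint isometric disks $\Delta_{\pm a},\Delta_{\pm b}$ guaranteed by the defining inequality of $\Cg$ in \eqref{eq:Cfrakg}, hence are distinct; the two polynomials then have different zero sets and cannot be proportional unless $P=0$. This makes the kernel trivial and the map an isomorphism. The only genuine computation is the diagonalisation and identification of the invariant line in the second step; the decisive geometric input, distinctness of the fixed points, is immediate from the disjointness of the Schottky isometric circles, so I expect no real obstacle beyond bookkeeping.
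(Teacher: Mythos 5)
Your proof is correct, and it is worth noting that the paper itself gives no argument for Lemma~\ref{lem:dimcobound} beyond citing Bers and calling it easy, so you have supplied a genuine proof where the source has none. The implicit ``standard'' route --- the one the paper actually uses for the exactly analogous injectivity step in the proof of Proposition~\ref{prop:BersMapalpha} --- is shorter but less elementary: a $\Gamma$-invariant $P\in\HON$ descends to a holomorphic differential of weight $1-N<0$ on $\Sg$, and the Riemann--Roch table forces $\mathcal{H}^{(g)}_{1-N}=0$ for $g\ge 2$, hence $P=0$. Your argument replaces this with an explicit spectral computation: conjugating $\gamma_{a}$ to $w\mapsto q_{a}w$ diagonalizes the action on $\HON$ with eigenvalues $q_{a}^{\,j}$, $j=1-N,\dots,N-1$, of which only $j=0$ equals $1$ because $0<|q_{a}|<1$, so the $\gamma_{a}$-invariant line is spanned by $(z-W_{a})^{N-1}(z-W_{-a})^{N-1}dz^{1-N}$; disjointness of the isometric disks in \eqref{eq:Cfrakg} (or simply discreteness and freeness of $\Gamma$) makes the fixed-point sets of two distinct generators disjoint, so for $N\ge 2$ the two invariant lines meet only in $0$. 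Both arguments are valid under the standing hypotheses $N\ge 2$, $g\ge 2$ (each is needed in your final step: $N\ge 2$ so that the invariant polynomial actually has zeros, $g\ge 2$ to have two generators). What your version buys is self-containedness and extra information --- the explicit invariant line for each generator and the fact that a single generator does \emph{not} suffice; what the Riemann--Roch version buys is brevity and uniformity with the rest of the paper's arguments, and it adapts immediately to arbitrary Kleinian groups as in Remark~\ref{rem:Bersmap}, where an explicit loxodromic normal form for every generator is less convenient.
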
 

Let $\COH:=\Ei/\Bound$ be the cohomology space of Eichler cocycles modulo  coboundaries.
\begin{lemma}{\cite{Be1}}\label{lem:dimH}
$\dim \COH =(g-1)(2N-1)$. 
\end{lemma}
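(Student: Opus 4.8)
The plan is to exploit the fact that a genus $g$ Schottky group $\Gamma$ is a \emph{free} group on the $g$ generators $\gamma_{1},\ldots,\gamma_{g}$, so that an Eichler cocycle is rigidly determined by, and freely specifiable on, these generators. Concretely, I would study the restriction map $\mathcal{R}\colon \Ei\to \HON^{\,g}$ defined by $\mathcal{R}(\Xi)=(\Xi[\gamma_{1}],\ldots,\Xi[\gamma_{g}])$, and show it is a linear isomorphism.

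For injectivity of $\mathcal{R}$, suppose $\Xi[\gamma_{a}]=0$ for all $a\in\{1,\ldots,g\}$. The relation $\Xi[\gamma^{-1}]=-\Xi[\gamma]|_{\gamma^{-1}}$ noted after \eqref{eq:cocycle} then forces $\Xi[\gamma_{a}^{-1}]=0$ as well, and repeated application of the cocycle identity \eqref{eq:cocycle} propagates the value $0$ along any reduced word in the $\gamma_{a}^{\pm 1}$, so $\Xi\equiv 0$. For surjectivity, given an arbitrary tuple $(\Xi_{1},\ldots,\Xi_{g})\in\HON^{\,g}$, I would \emph{define} $\Xi$ on a reduced word $\gamma=\gamma_{a_{1}}^{\epsilon_{1}}\cdots\gamma_{a_{k}}^{\epsilon_{k}}$ by iterating \eqref{eq:cocycle} (setting $\Xi[\gamma_{a}^{-1}]:=-\Xi_{a}|_{\gamma_{a}^{-1}}$); because $\Gamma$ is free there are no relations among the generators, hence no consistency conditions arise, and $\Xi$ is a well-defined cocycle with $\mathcal{R}(\Xi)=(\Xi_{1},\ldots,\Xi_{g})$. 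This is nothing more than the standard statement that a crossed homomorphism out of a free group is the same datum as an arbitrary assignment on a free generating set.

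Granting the isomorphism $\mathcal{R}$ and the value $\dim\HON=2N-1$ (the number of coefficients of $p\in\Pi_{2N-2}$), I obtain $\dim\Ei=g(2N-1)$. By Lemma~\ref{lem:dimcobound} we have $\Bound\simeq\HON$, whence $\dim\Bound=2N-1$; note this also encodes the fact that $\HON$ has no nonzero $\Gamma$-invariant element, so the coboundary map is injective. Since $\COH=\Ei/\Bound$ by definition, subtracting dimensions yields
\begin{align*}
\dim\COH=\dim\Ei-\dim\Bound=g(2N-1)-(2N-1)=(g-1)(2N-1),
\end{align*}
as claimed.

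The only genuine content — and the step I would treat most carefully — is the isomorphism $\Ei\simeq\HON^{\,g}$; once freeness of $\Gamma$ is invoked it is routine, but it is precisely the point where the Schottky hypothesis enters (in contrast to a surface group presented with a relation, where one would have to account for a coboundary-type constraint among the generator values). This is what makes $\dim\Ei$ directly computable, and everything downstream is a dimension count resting on Lemma~\ref{lem:dimcobound}.
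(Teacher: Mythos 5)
Your proof is correct and follows essentially the same route as the paper: freeness of $\Gamma$ gives $\Ei\simeq\HON^{\,g}$, hence $\dim\Ei=g(2N-1)$, and subtracting $\dim\Bound=2N-1$ via Lemma~\ref{lem:dimcobound} yields the result. You simply spell out the isomorphism $\mathcal{R}$ and its injectivity/surjectivity in more detail than the paper's one-line appeal to freeness.
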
 
\begin{proof}
Since $\Gamma$ is freely generated by $\{\gamma_{a}\}$ for $a\in\Ip$, a cocycle $\Xi$ is determined by its evaluation on $\gamma_{a}$. Thus 
\[
\dim\Ei=g\dim\HON=g(2N-1). 
\]
The result follows  on applying Lemma~\ref{lem:dimcobound}.
\end{proof}
Note that $\dim\COH=\dim\HgN$ for $N\ge 2$ and $g\ge 2$. The Bers map discussed below describes  a bijection between these spaces. 
\medskip

$F(y)=f(y)dy^{1-N}$ for $f(y)$ continuous for $y\in\Omo$  is called a Bers potential  for a holomorphic $N$-form $ \Phi =\phi (y)dy^{N}\in \HgN$ provided $f(y)$ satisfies\footnote{We include a factor of $\frac{1}{\pi}$ in comparison to \cite{Be1, McIT} for later convenience.}
\begin{align}
\label{eq:dF}
&\frac{1}{\pi}\dyb f=\overline{\phi (y)}\,\rho(y)^{1-N},
\\
&\lim_{y\rightarrow 0}\left\vert y ^{2N-2}f\left(y^{-1}\right)\right\vert <\infty, 
\label{eq:Flim}
\end{align} 
where $\dyb=\frac{\partial}{\partial \ybar}$. 
\eqref{eq:dF} can also be written in the following  coordinate-free way 
\begin{align}
\label{eq:dPhiF}
\frac{1}{\tpi}d \left(F\Theta  \right)=\Theta \,\overline{\Phi}\,\PM^{-N}\omega,
\end{align} 
for all holomorphic  $\Theta \in\HgN$ with exterior derivative $d\left(h(y)dy\right)=-\dyb h \,dy\wedge d\ybar$ and volume form $\omega$ of \eqref{eq:dmu}.
\eqref{eq:Flim} ensures that $F(y)$ is defined at the point at infinity. Let $\mathcal{F}_{N}$ denote the vector space of  Bers potentials.
It is straightforward to see that
\begin{lemma}{\cite{Be1}}\label{lem:ZeroPot}
$F\in\FN$ is a  Bers potential for $\Phi =0$ iff  $F\in\HON$.
\end{lemma}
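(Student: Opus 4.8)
The plan is to prove the two implications separately, the reverse being a direct verification and the forward direction reducing to a Liouville-type argument once a removability issue is resolved.

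For the reverse implication I would start from $F=P\in\HON$, so that $F=p(y)\,dy^{1-N}$ with $p\in\Pi_{2N-2}$. Since $p$ is entire we have $\dyb p=0$, which is exactly \eqref{eq:dF} with $\phi=0$, so $F$ is a candidate Bers potential for $\Phi=0$. To see that \eqref{eq:Flim} also holds, write $p(z)=\sum_{k=0}^{2N-2}c_{k}z^{k}$; then $y^{2N-2}p(y^{-1})=\sum_{k=0}^{2N-2}c_{k}\,y^{2N-2-k}$ is a polynomial in $y$ all of whose exponents are nonnegative, so its modulus tends to the finite limit $|c_{2N-2}|$ as $y\to 0$. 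Hence $P\in\FN$ is a Bers potential for $\Phi=0$, and this direction is complete.

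For the forward implication, suppose $F=f(y)\,dy^{1-N}\in\FN$ is a Bers potential for $\Phi=0$. Putting $\phi=0$ in \eqref{eq:dF} gives $\dyb f=0$ on $\Omo$, so $f$ is holomorphic on the domain of discontinuity $\Omo=\Chat\setminus\Lambda(\Gamma)$. Reading \eqref{eq:Flim} in the coordinate $w=y^{-1}$ near infinity, it asserts that $f(w)=O\!\left(w^{\,2N-2}\right)$ as $w\to\infty$, i.e. $f$ has at worst polynomial growth of degree $2N-2$ at the point $\infty\in\Omo$. The desired conclusion $f\in\Pi_{2N-2}$, equivalently $F\in\HON$, would then follow from the generalised Liouville theorem, \emph{provided} $f$ is known to be entire rather than merely holomorphic off the limit set.

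The main obstacle, therefore, is to upgrade ``holomorphic on $\Omo$'' to ``holomorphic on all of $\Chat$'', i.e. to show that $\Lambda(\Gamma)$ is a removable set for $f$. Here one would use that $\Lambda(\Gamma)$ is closed, nowhere dense and of two-dimensional measure zero, lying in the nested interiors of the Schottky disks $\Delta_{\pm a}$ and hence disjoint from the closure of the fundamental region $\D$. The genuine difficulty is to rule out an essential singularity or a non-removable accumulation of poles of $f$ along $\Lambda(\Gamma)$: since $f$ is continuous on $\Omo$ it is bounded on $\overline{\D}$, and the point is to propagate a suitable local bound across $\Lambda(\Gamma)$ using the quasi-periodic (cocycle) behaviour of $f$ under $\Gamma$, after which a removable-singularity theorem for locally bounded holomorphic functions across a negligible set extends $f$ to an entire function. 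Once this extension is established, the growth bound $f(w)=O(w^{2N-2})$ forces $f$ to be a polynomial of degree $\le 2N-2$, giving $F\in\HON$ and completing the proof.
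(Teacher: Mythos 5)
The paper does not actually prove this lemma (it is quoted from \cite{Be1}), so there is no in-paper argument to match yours against; judged on its own terms, your reverse implication is complete and correct, but the forward implication has a genuine gap at exactly the point you flag, and the repair you sketch would not close it. First, two-dimensional Lebesgue measure zero is not a removability criterion for bounded holomorphic functions: removability is governed by analytic capacity, Painlev\'e's theorem requires vanishing \emph{one}-dimensional Hausdorff measure, and nothing in the setup controls the one-dimensional measure (or analytic capacity) of $\Lambda(\Gamma)$, whose Hausdorff dimension can exceed $1$. So ``locally bounded holomorphic across a negligible set'' does not yield an extension. Second, the bound you propose to propagate using the quasi-periodic behaviour of $f$ is circular: the statement that $F|_{\gamma}-F$ lies in $\HON$ (Lemma~\ref{lem:xiF_cocycle}) is itself deduced from the present lemma --- a priori one only knows that $F|_{\gamma}-F$ is a holomorphic $(1-N)$-form on $\Omo$, which is precisely the class of objects whose polynomiality is in question.

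The missing idea is definitional rather than function-theoretic. In Bers' formulation a potential is a function continuous on all of $\C$, with locally integrable distributional derivatives, satisfying \eqref{eq:dF} in the sense of distributions on all of $\C$ (with $\overline{\phi}\,\rho^{1-N}$ extended by zero off $\Omo$), together with \eqref{eq:Flim}. For $\Phi=0$ Weyl's lemma then makes $f$ entire in one step, and your Liouville argument finishes the proof; no removability question ever arises. If one instead reads the paper's definition literally ($f$ continuous and \eqref{eq:dF} imposed only on $\Omo$), the forward implication is actually false: for $\lambda_{0}\in\Lambda(\Gamma)$ the function $f(y)=(y-\lambda_{0})^{-1}$ is holomorphic on $\Omo$, satisfies \eqref{eq:Flim} because $y^{2N-2}f(y^{-1})=y^{2N-1}/(1-\lambda_{0}y)\rightarrow 0$ as $y\rightarrow 0$, and is not a polynomial. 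So you should import the global distributional definition of a potential; once that is in place your argument closes immediately.
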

A Bers potential $F$ is quasiperiodic under the action of the Schottky group.  Define
\begin{align}
\Xi_{F}[\gamma]:=F|_{\gamma}-F,\quad \gamma\in\Gamma,
\label{eq:xiF}
\end{align}
where
$F|_\gamma (y)=F(\gamma y)$. We then find
\begin{lemma}\label{lem:xiF_cocycle}
$\Xi_{F}$ is an Eichler 1-cocycle for each  $F\in\FN$. 
\end{lemma}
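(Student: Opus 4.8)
The plan is to verify the two defining ingredients of an Eichler cocycle in turn: the cocycle identity \eqref{eq:cocycle}, and the membership $\Xi_{F}[\gamma]\in\HON$ for each $\gamma\in\Gamma$. The cocycle identity is immediate and formally identical to the coboundary computation following \eqref{eq:cobound}: as recorded after \eqref{eq:MobPi}, the slash action satisfies $F|_{\gamma\lambda}=F|_{\gamma}|_{\lambda}$, so that $\Xi_{F}[\gamma\lambda]=F|_{\gamma\lambda}-F=(F|_{\gamma}-F)|_{\lambda}+F|_{\lambda}-F=\Xi_{F}[\gamma]|_{\lambda}+\Xi_{F}[\lambda]$. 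The substance of the lemma is therefore the claim that the smooth quasiperiodic difference $\Xi_{F}[\gamma]=F|_{\gamma}-F$ is in fact a polynomial differential of weight $1-N$ and degree at most $2N-2$.

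My strategy for this is to avoid analysing $F|_{\gamma}-F$ directly near the limit set and instead to show that $F|_{\gamma}$ is \emph{itself} a Bers potential for the same form $\Phi$, whence by linearity of \eqref{eq:dF} and \eqref{eq:Flim} the difference $F|_{\gamma}-F$ is a Bers potential for $\Phi-\Phi=0$ and Lemma~\ref{lem:ZeroPot} forces $\Xi_{F}[\gamma]\in\HON$ at once. Writing $F|_{\gamma}(y)=f_{\gamma}(y)\,dy^{1-N}$ with $f_{\gamma}(y)=f(\gamma y)\,\gamma'(y)^{1-N}$, the first thing to check is \eqref{eq:dF} for $f_{\gamma}$. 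Here I would use that $\gamma'(y)^{1-N}$ is holomorphic together with the chain rule $\dyb[f(\gamma y)]=\overline{\gamma'(y)}\,(\dyb f)(\gamma y)$ and \eqref{eq:dF} for $f$ to obtain $\dyb f_{\gamma}=\pi\,\gamma'(y)^{1-N}\,\overline{\gamma'(y)}\,\overline{\phi(\gamma y)}\,\rho(\gamma y)^{1-N}$. The key input is then the $\Gamma$-invariance of the data: $\Phi=\phi\,dy^{N}\in\HgN$ gives $\phi(\gamma y)=\phi(y)\gamma'(y)^{-N}$, while the Poincar\'e metric $\PM=\rho\,dy\,d\ybar\in\mathcal{A}_{1,1}$ gives $\rho(\gamma y)=\rho(y)|\gamma'(y)|^{-2}$. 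Substituting these and collecting the holomorphic and anti-holomorphic Jacobian factors, all powers of $\gamma'(y)$ and $\overline{\gamma'(y)}$ cancel, leaving $\dyb f_{\gamma}=\pi\,\overline{\phi(y)}\,\rho(y)^{1-N}=\dyb f$; thus $f_{\gamma}$ satisfies \eqref{eq:dF} for the same $\Phi$.

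It remains to check the regularity condition \eqref{eq:Flim} for $f_{\gamma}$, together with the continuity of $f_{\gamma}$ on $\Omo$; the latter is clear since $\gamma$ maps $\Omo$ biholomorphically to itself and $f$ is continuous there. For \eqref{eq:Flim} I would argue that this condition is precisely the statement that the weight $1-N$ form $F$ is regular at the point at infinity, a property covariant under the M\"obius slash action \eqref{eq:MobPi} on $\HON$; transporting the regularity of $F$ at the point $\gamma\cdot\infty\in\Omo$ to regularity of $F|_{\gamma}$ at $\infty$ (and tracking the $\gamma'(y)^{1-N}\sim y^{2N-2}$ behaviour as $y\to\infty$) then yields \eqref{eq:Flim} for $f_{\gamma}$. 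With both \eqref{eq:dF} and \eqref{eq:Flim} established, $F|_{\gamma}\in\FN$ is a Bers potential for $\Phi$, so $F|_{\gamma}-F\in\FN$ is one for $0$, and Lemma~\ref{lem:ZeroPot} gives $\Xi_{F}[\gamma]\in\HON$, completing the verification. I expect the main obstacle to be precisely this last step: the holomorphy computation is mechanical, but pinning down \eqref{eq:Flim} for $f_{\gamma}$ — equivalently, the degree bound $\le 2N-2$ that makes $\Xi_{F}[\gamma]$ genuinely land in $\HON$ rather than being merely entire — is where the behaviour at infinity and the M\"obius covariance of the growth condition must be handled with care.
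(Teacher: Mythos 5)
Your proof is correct, and it reaches the conclusion by a somewhat different route than the paper. The paper's own proof works with the coordinate-free equation \eqref{eq:dPhiF}: since the right-hand side $\Theta\,\overline{\Phi}\,\PM^{-N}\omega$ is a $\Gamma$-invariant $2$-form, $d\left((F\Theta)|_{\gamma}-F\Theta\right)=0$ for every $\Theta\in\HgN$, so $\Xi_{F}[\gamma]$ is holomorphic on $\Omo$ and is then directly asserted to lie in $\HON$; the passage from ``holomorphic of weight $1-N$ on $\Omo$ with the right behaviour at infinity'' to ``polynomial differential of degree at most $2N-2$'' is left implicit, and the growth condition \eqref{eq:Flim} is never mentioned. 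You instead show that $F|_{\gamma}$ is itself a Bers potential for the same $\Phi$ --- your Jacobian cancellation is precisely the statement that the right-hand side of \eqref{eq:dF} is the component of a $\Gamma$-invariant object, so the two computations carry identical content --- and then invoke Lemma~\ref{lem:ZeroPot} to conclude $F|_{\gamma}-F\in\HON$. What your organization buys is that the one nontrivial analytic input (a weight $1-N$ form holomorphic on $\Omo$ and regular at $\infty$ must be a polynomial differential) is isolated in the previously stated Lemma~\ref{lem:ZeroPot} rather than elided, and you are the only one of the two to actually verify \eqref{eq:Flim} for $F|_{\gamma}$, which is needed for that lemma to apply; your verification is the right one, since $f$ is continuous at the finite point $\gamma\cdot\infty\in\Omo$ while $\gamma'(y)^{1-N}=(Cy+D)^{2N-2}$ supplies exactly the factor $y^{2-2N}$ that \eqref{eq:Flim} absorbs. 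The cocycle identity itself is handled identically in both arguments.
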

\begin{proof}
\eqref{eq:dPhiF} implies $
d\left((F\Theta)|_{\gamma}-F\Theta\right)=0$ 
for all $\Theta\in\HgN$
so that $\Xi_{F}[\gamma]\in\HON$. 
$\Xi_{F}$ is a 1-cocycle since 
$F|_{\gamma\lambda}-F=\left(F|_{\gamma}-F\right)|_{\lambda}+F|_{\lambda}-F$.
\end{proof}
 
The existence of a Bers potential for $\Phi \in \HgN$ for $N\ge 2$ and $g\ge 2$ follows from consideration of the following Poincar\'e series   defined for all $x,y\in \Omo$  by \cite{Be1,Be2} 
\begin{align}
\label{eq:GEMBers}
\GEMBers_{N}(x,y):=\sum_{\gamma\in\Gamma} 
\frac{1}{\gamma x-y}\prod_{j=1}^{2N-1}\frac{y-A_j}{\gamma x-A_j}d(\gamma x)^N dy^{1-N}.
\end{align}
Here $A_1,\ldots, A_{2N-1}$ are any distinct   elements of the limit set $\Lambda(\Gamma)$. 
$\GEMBers_{N}(x,y)$ is meromorphic  in $x,y\in \Omo$ with a simple pole  of residue one at $y=\gamma x$ for all $\gamma\in\Gamma$  \cite{Be1,Be2,G,McI,McIT}. Furthermore, $\GEMBers_{N}(x,y)$ is a bidifferential $(N,1-N)$-quasiform with respect to the Schottky group as follows.
By construction, $\GEMBers_{N}(x,y)$ is an $N$-differential in $x$ so that for all $\gamma\in\Gamma$
\begin{align}
\GEMBers_{N}(\gamma x,y) &= \GEMBers_{N}(x,y).
\label{eq:GEMBersx}
\end{align}
$\GEMBers_{N}(x,y)$ is a quasiperiodic $1-N$ form  in $y$ where  
\begin{align}\label{eq:GEMBersgamy}
\GEMBers_{N}( x,\gamma y) - \GEMBers_{N}(x,y)&=\chi^{\Bers}[\gamma](x,y),
\end{align}
for $\gamma\in\Gamma$ where $\chi^{\Bers}[\gamma](x,y)$ is holomorphic for $x,y\in\Omo$. 

For a given $N$-differential $ \Phi  \in \HgN$ and $y\in\Omo$ we define 
\begin{align*}
F^{\Bers}(y):=-\iint_{\D}\GEMBers_{N}(\cdot,y)\,
\overline{\Phi}\,\PM^{-N}\omega
=-\langle \GEMBers_{N}(\cdot,y), \Phi \rangle.
\end{align*}
$F^{\Bers}(y)=f^{\Bers}(y)dy^{1-N}$ satisfies \eqref{eq:Flim} with  $f^{\Bers}(y)$ continuous on $\Omo$ \cite{Be1}. Since 
\begin{align}
\label{eq:fdelta}
f(y)=-\frac{1}{\pi}\dyb\iint_{R} \frac{f(z)}{z-y}\,d^{2}z,\quad y\in R,
\end{align}
for any complex function $f(z)$ on an open region $R\subset\C$ (e.g. \cite{GR}), we find that  
 $F^{\Bers}(z)$ satisfies \eqref{eq:dF}. Thus $F^{\Bers}(y)$ is a Bers potential for $\Phi  $  
with cocycle $\Xi^{\Bers}$ from \eqref{eq:xiF} and we find:
\begin{proposition}{\cite{Be1}}
There exists a Bers potential  for each $ \Phi  \in \HgN$.
\end{proposition}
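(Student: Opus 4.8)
The plan is to produce the potential \emph{explicitly} rather than abstractly, namely to verify that the candidate already assembled from the Bers Poincar\'e series,
\[
f^{\Bers}(y) = -\iint_{\D} \gem^{\Bers}_{N}(x,y)\,\overline{\phi(x)}\,\rho(x)^{1-N}\,d^{2}x,
\]
with $\gem^{\Bers}_{N}$ the scalar part of the kernel \eqref{eq:GEMBers} and $F^{\Bers}(y)=f^{\Bers}(y)\,dy^{1-N}=-\langle \GEMBers_{N}(\cdot,y),\Phi\rangle$, actually lies in $\FN$. By definition this means checking the two conditions \eqref{eq:dF} and \eqref{eq:Flim}. The analytic groundwork --- that the series \eqref{eq:GEMBers} converges locally uniformly on $\Omo\times\Omo$ off the orbit $\{y=\gamma x\}$, that $\GEMBers_{N}$ is meromorphic in $y$ with simple poles of residue one exactly at $y=\gamma x$, and that $f^{\Bers}$ is continuous on $\Omo$ --- is supplied by the facts recalled before the statement (this is where the hypothesis $N\ge 2$ enters, the $2N-1$ factors $\prod_{j}\frac{y-A_{j}}{\gamma x-A_{j}}$ giving precisely the decay that makes the Poincar\'e sum converge). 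Granting these, the proof reduces to a $\dyb$ computation.

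To verify \eqref{eq:dF} I would apply $\dyb$ to $f^{\Bers}$, moving the derivative inside the integral over $\D$ and inside the sum. Since $\GEMBers_{N}(x,y)$ is meromorphic in $y$, each term is $\ybar$-holomorphic away from its pole $y=\gamma x$, so $\dyb$ annihilates it there. For fixed $y\in\D$ the only orbit representative of $y$ lying in $\D$ is $y$ itself, so among the points $\{\gamma x : x\in\D\}$ only the $\gamma=\id$ term has its pole, at $x=y$, inside the region of integration; every $\gamma\neq\id$ term is holomorphic in $y$ throughout $\D$ and is killed by $\dyb$. Near $x=y$ the surviving term factors as $\tfrac{1}{x-y}$ times $\prod_{j}\tfrac{y-A_{j}}{x-A_{j}}\,\overline{\phi(x)}\,\rho(x)^{1-N}$, a quantity holomorphic in $y$ that equals $\overline{\phi(x)}\,\rho(x)^{1-N}$ on the diagonal. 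The Cauchy-kernel identity \eqref{eq:fdelta} --- the distributional statement that $-\tfrac{1}{\pi}\dyb\tfrac{1}{x-y}$ is a Dirac mass at $x=y$ --- then collapses the integral to its diagonal value, yielding $\tfrac{1}{\pi}\dyb f^{\Bers}(y)=\overline{\phi(y)}\,\rho(y)^{1-N}$, which is exactly \eqref{eq:dF}; the coordinate-free form \eqref{eq:dPhiF} records the same identity independently of the chart. The condition \eqref{eq:Flim} at infinity follows from the built-in weight $1-N$ behaviour of the kernel together with boundedness of the Petersson pairing \eqref{eq:Pprod}, both part of Bers' estimate. Combining the two verifications gives $F^{\Bers}\in\FN$ as a Bers potential for $\Phi$.

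The main obstacle is the second paragraph: justifying rigorously the interchange of $\dyb$ with both the integral over $\D$ and the infinite Poincar\'e sum, and correctly isolating the single distributional delta contribution from the diagonal pole while confirming that all remaining terms are genuinely $\ybar$-holomorphic on $\D$ and therefore vanish under $\dyb$. Once the convergence and residue structure of $\GEMBers_{N}$ are in hand, the remaining identities are essentially bookkeeping driven by \eqref{eq:fdelta}.
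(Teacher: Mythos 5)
Your proposal is correct and follows essentially the same route as the paper: define $F^{\Bers}(y)=-\langle \GEMBers_{N}(\cdot,y),\Phi\rangle$, cite Bers' estimates for the continuity of $f^{\Bers}$ and for condition \eqref{eq:Flim}, and verify \eqref{eq:dF} via the Cauchy-kernel identity \eqref{eq:fdelta} applied to the single diagonal pole at $x=y$ in $\D$. The paper is merely terser, delegating the convergence and interchange-of-limits issues you flag to \cite{Be1} rather than spelling them out.
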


By Lemma~\ref{lem:ZeroPot} the most general Bers potential  for $\Phi\in\HgN$ is of the form 
$
F=F^{\Bers}+P
$
for some $P\in\HON$ with  1-cocycle $\Xi=\Xi^{\Bers}+\Xi_{P}$. 
Let $\{\Phi _{r}\}_{r=1}^{d_{N}}$  be a  $\HgN$-basis of dimension $d_{N}=(g-1)(2N-1)$ where  $\Phi _{r}$ has potential $F_{r}(y)=F^{\Bers}_{r}(y)+P_{r}(y)$ with  cocycle $\Xi_{r}=\Xi^{\Bers}_{r}+\Xi_{P_{r}}$ for some choice of $P_{r}\in\HON$.
Let $\{\Phi ^{\vee}_{r}\}_{r=1}^{d_{N}}$ be the Petersson dual basis. 
We may define the following meromorphic bidifferential $(N,1-N)$-quasiform
\begin{align}
\label{eq:GEMdef}
\GEM_{N}(x,y):=\GEMBers_{N}(x,y)-\sum_{r=1}^{d_{N}}\Phi ^{\vee}_{r}(x)P_{r}(y),
\end{align}
where  the Bers potential for $\Phi _{r}$ is given by
\begin{align} 
{F}_{r} (y)=-\langle \GEM_{N}(\cdot,y), \Phi _{r} \rangle.
\label{eq:Bers_pot}
\end{align}
 $\GEM_{N}(x,y)$ is an $N$-form in $x$ and a quasiperiodic $1-N$ form  in $y$ with 
\begin{align}\label{eq:GEMgamy}
\GEM_{N}( x,\gamma y) - \GEM_{N}(x,y)&={\chi}[\gamma](x,y),
\end{align}
where ${\chi}[\gamma](x,y)$ is holomorphic for all $x,y\in \Omo$. 
Hence \eqref{eq:GEMBersx} and \eqref{eq:Bers_pot} imply
	\begin{align}
	\label{eq:chigam}
	{\chi}[\gamma](x,y)=&-\sum_{r=1}^{d_{N}}\Phi ^{\vee}_{r}(x){\Xi}_{r}[\gamma](y).
	\end{align}
We refer to $\GEM_{N}$ of \eqref{eq:GEMdef} as a Green's function with Extended Meromorphicity or a GEM form  for reasons explained in \S\ref{sec:Green's}. The space of  GEM forms is of dimension $(2N-1)d_N=(g-1)(2N-1)^{2}$ from \eqref{eq:GEMdef}. 

 \subsection{The Bers map}  
The following commutative diagram  summarizes the various maps introduced in the previous  section
 \begin{align}
 \label{eq:maps}
 \begin{CD}
 \mathcal{F}_{N}  @>\alpha>> \Ei
 \\
 @V\epsilon VV @V\delta VV  
 \\
 \HgN  @>\beta >>  \COH
 \end{CD}
 \end{align}
 where $\alpha$ is the linear map determined by \eqref{eq:xiF}, $\epsilon $ is the complex anti-linear map determined by  \eqref{eq:dF} with pre-image determined by \eqref{eq:Bers_pot}  and $\delta$ is  the coboundary quotient map with $\Bound=\ker \delta $. Then 
 Lemmas~\ref{lem:dimcobound} and \ref{lem:ZeroPot} are equivalent to
 \begin{align}\label{eq:kernels}
 \ker \epsilon  =\HON,\quad \ker \delta=\alpha (\ker \epsilon  ).
 \end{align}
The complex antilinear mapping $\beta $ is known as the Bers map. 
  We have the following fundamental result (which is a reformulation of Bers' classic result \cite{Be1})  
\begin{proposition}\label{prop:BersMapalpha}
$\mathcal{F}_{N}\simeq \Ei$ as vector spaces.
\end{proposition}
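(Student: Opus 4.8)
The plan is to prove that the $\C$-linear map $\alpha$ of \eqref{eq:maps}, which sends a Bers potential $F$ to its Eichler cocycle $\Xi_{F}$ defined in \eqref{eq:xiF}, is a bijection; since $\alpha$ is manifestly linear it is enough to show that $\alpha$ is injective and that $\dim\FN=\dim\Ei$. For the dimension count I would first note that $\epsilon\colon\FN\to\HgN$ is onto — this is precisely the existence of a Bers potential for every $\Phi\in\HgN$ — and that $\ker\epsilon=\HON$ by Lemma~\ref{lem:ZeroPot}. Rank-nullity for $\epsilon$ (valid despite its antilinearity, since complex dimensions are unchanged under conjugation) then yields
\[
\dim\FN=\dim\HON+\dim\HgN=(2N-1)+(g-1)(2N-1)=g(2N-1),
\]
which already agrees with $\dim\Ei=g\dim\HON=g(2N-1)$, the value obtained in the proof of Lemma~\ref{lem:dimH} from the freeness of $\Gamma$.

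The heart of the argument is injectivity, i.e.\ $\ker\alpha=0$. Suppose $\Xi_{F}=0$, so that $F|_{\gamma}=F$ for all $\gamma\in\Gamma$ and $F$ is $\Gamma$-invariant. Then for every $\Theta\in\HgN$ the product $F\Theta$ is a $\Gamma$-invariant weight-$1$ form, hence descends to a globally defined $1$-form on the closed surface $\Sg$. By the coordinate-free identity \eqref{eq:dPhiF} its exterior derivative equals $\tpi\,\Theta\,\overline{\Phi}\,\PM^{-N}\omega$, where $\Phi=\epsilon(F)\in\HgN$. Integrating over $\Sg$ and invoking Stokes' theorem on a surface without boundary (equivalently, the boundary contributions on the fundamental region $\D$ cancel under the $\gamma_{a}$-identifications) gives $\iint_{\Sg}d(F\Theta)=0$, whence $\langle\Theta,\Phi\rangle=0$ for all $\Theta\in\HgN$. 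Choosing $\Theta=\Phi$ and using positive-definiteness of the Petersson product \eqref{eq:Pprod} forces $\Phi=0$, so $F\in\ker\epsilon=\HON$ by Lemma~\ref{lem:ZeroPot}. Finally, an element of $\HON$ with vanishing coboundary must be $0$ by Lemma~\ref{lem:dimcobound} (the map $P\mapsto\Xi_{P}$ being injective on $\HON$), so $F=0$.

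An injective $\C$-linear map between finite-dimensional spaces of equal dimension is an isomorphism, giving $\alpha\colon\FN\xrightarrow{\sim}\Ei$ and hence $\FN\simeq\Ei$. As a consistency check one sees that $\alpha$ restricts to the isomorphism $\ker\epsilon=\HON\to\ker\delta=\Bound$ recorded in \eqref{eq:kernels}, which is again Lemma~\ref{lem:dimcobound}. The step I expect to demand the most care is the injectivity argument, specifically the justification of Stokes' theorem for $F\Theta$: the potential $f$ is only known to be continuous, so one must verify that $F\Theta$ is regular enough for the integral to vanish (it is, since the right-hand side of \eqref{eq:dF} is continuous) and that the point at infinity is controlled by the growth condition \eqref{eq:Flim}. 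The remaining content is bookkeeping with the commutative diagram \eqref{eq:maps} and the dimension counts above.
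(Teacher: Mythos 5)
Your proposal is correct and follows essentially the same route as the paper: injectivity of $\alpha$ via Stokes' theorem on the fundamental domain (giving $\langle\Theta,\Phi\rangle=0$, hence $\Phi=0$), combined with the dimension count $\dim\FN=g(2N-1)=\dim\Ei$ coming from Lemma~\ref{lem:ZeroPot} and the freeness of $\Gamma$. The only cosmetic difference is at the final step of injectivity, where the paper concludes $F\in\mathcal{H}^{(g)}_{1-N}=0$ directly by Riemann--Roch while you pass through $F\in\HON$ and the injectivity of $P\mapsto\Xi_P$ implicit in Lemma~\ref{lem:dimcobound}; both are valid.
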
 
\begin{proof}
We first show that $\alpha:\mathcal{F}_{N} \rightarrow \Ei$ is injective. Let $F\in\mathcal{F}_{N}$ be a potential for $\Phi\in\HgN$ such that $\Xi_{F}=\alpha(F)=0$. Thus $F\vert_{\gamma}=F$ for all $\gamma\in\Gamma$ implying $F\in\mathfrak{H}^{(g)}_{1-N}$.
Hence \eqref{eq:dPhiF} implies that for all  $\Theta \in\HgN$ we have 
\begin{align}
\langle \Theta,\Phi\rangle & = \frac{1}{\tpi}\iint_{\D}d \left(F\Theta  \right)
=-\sum_{a\in\I}\frac{1}{\tpi}\oint_{\mathcal{C}_a}\Theta F
\notag
\\
&=-\frac{1}{\tpi}\sum_{a=1}^{g}\oint_{\Con{a}}\Theta \left(F-F|_{\gamma_{a}}\right)=0,
\label{eq:zeroStokes}
\end{align}
by Stokes' theorem on the fundamental domain $\mathcal D$  formed from $\Chat$ by excising $2g$ discs with oriented boundary curves $\mathcal{C}_a$  and that $\Con{-a}=-\gamma_{a} \Con{a}$. 
Since $\langle \, ,\,\rangle $ is invertible we have $\Phi=0$ which implies $F$ is holomorphic
by \eqref{eq:dPhiF}. Hence $F\in\mathcal{H}^{(g)}_{1-N}$ so that $F=0$ by the Riemann-Roch theorem.
Lemma~\ref{lem:ZeroPot} implies $\dim\mathcal{F}_{N}=g(2N-1)=\dim\Ei$ so that $\alpha$ is also surjective and thus bijective.
\end{proof}

\begin{remark}\label{rem:Bersmap}
Proposition~\ref{prop:BersMapalpha} together with  \eqref{eq:kernels} imply that the Bers map $\beta:  \HgN \rightarrow \COH$ is also bijective.
The statement and proof of  Proposition~\ref{prop:BersMapalpha} can be adapted to any Kleinian group  $\Gamma$  since $\D$ consists of a finite set of disconnected components \cite{Be1} and hence $\langle \Theta,\Phi\rangle=0$, as in \eqref{eq:zeroStokes}, so  that  $\alpha$ and  $\beta$ are injective.
\end{remark}

We note the following useful identity \cite{Be1,McIT}  
\begin{proposition}\label{prop:ThetaPhi}
Let $\Xi \in\Ei$  with $\Phi=\left( \epsilon \circ\alpha^{-1} \right)( \Xi)\in\HgN$. Then 
\begin{align}
\langle \Theta,\Phi \rangle =  \frac{1}{\tpi}\sum_{a=1}^{g}\oint_{\Con{a}}\Theta\,\Xi[\gamma_{a}],
\label{eq:ThetaPhi}
\end{align}
for all $\Theta \in\HgN$.
\end{proposition}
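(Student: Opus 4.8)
The plan is to recycle almost verbatim the Stokes'-theorem computation that already appears in the proof of Proposition~\ref{prop:BersMapalpha} (namely the chain \eqref{eq:zeroStokes}), the only difference being that this time I do not impose $\Xi=0$ and therefore retain the boundary contributions coming from the quasiperiodicity of the potential.

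First I would use Proposition~\ref{prop:BersMapalpha}, which guarantees that $\alpha$ is bijective, to set $F:=\alpha^{-1}(\Xi)\in\FN$. By construction $\Xi_{F}[\gamma]=F|_{\gamma}-F=\Xi[\gamma]$ for all $\gamma\in\Gamma$, and $\Phi=\epsilon(F)\in\HgN$ is precisely the holomorphic $N$-form for which $F$ is a Bers potential, so that $\Phi=(\epsilon\circ\alpha^{-1})(\Xi)$ is exactly the form appearing in the statement. Next, for an arbitrary $\Theta\in\HgN$ I would invoke the coordinate-free identity \eqref{eq:dPhiF} and integrate over the Schottky fundamental region, giving
\begin{align*}
\langle \Theta,\Phi\rangle=\frac{1}{\tpi}\iint_{\D}d\left(F\Theta\right)=-\sum_{a\in\I}\frac{1}{\tpi}\oint_{\Con{a}}\Theta F,
\end{align*}
which are just the first two equalities of \eqref{eq:zeroStokes}.

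The crucial step is then to pair the contour over $\Con{-a}$ with the one over $\Con{a}$. Using $\Con{-a}=-\gamma_{a}\Con{a}$ together with the $\Gamma$-invariance of the weight-$N$ form $\Theta$ (so that the integrand along $\Con{-a}$ becomes $\Theta\,(F|_{\gamma_{a}})$ along $\Con{a}$), the $2g$ boundary integrals collapse to
\begin{align*}
\langle \Theta,\Phi\rangle=\frac{1}{\tpi}\sum_{a=1}^{g}\oint_{\Con{a}}\Theta\left(F|_{\gamma_{a}}-F\right)=\frac{1}{\tpi}\sum_{a=1}^{g}\oint_{\Con{a}}\Theta\,\Xi[\gamma_{a}],
\end{align*}
where the last equality is simply the definition \eqref{eq:xiF} of $\Xi_{F}=\Xi$. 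This is the asserted formula.

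The only point that genuinely requires care, and hence the step I would treat as the main obstacle, is the orientation bookkeeping in this pairing: I must check that the sign produced by $\Con{-a}=-\gamma_{a}\Con{a}$ combines with the excised-disc orientation of $\partial\D$ to yield exactly $F|_{\gamma_{a}}-F$ rather than its negative, so that $\Xi[\gamma_{a}]$ enters with the correct sign. Since this identical manipulation is already performed in \eqref{eq:zeroStokes}, I would import it wholesale, merely refraining from discarding the resulting $F|_{\gamma_{a}}-F$ terms as $\Xi=0$ would allow. A minor accompanying observation is that $\Theta\,\Xi[\gamma_{a}]$ is a genuine weight-$1$ differential, being the product of a weight-$N$ form and a weight-$(1-N)$ form, so each contour integral on the right-hand side is well-defined.
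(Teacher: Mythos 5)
Your proposal is correct and follows essentially the same route as the paper: the paper's proof also sets $F=\alpha^{-1}(\Xi)$, applies \eqref{eq:dPhiF} and Stokes' theorem on $\D$, and collapses the $2g$ boundary integrals exactly as in \eqref{eq:zeroStokes} to obtain $\frac{1}{\tpi}\sum_{a=1}^{g}\oint_{\Con{a}}\Theta\,\Xi[\gamma_{a}]$. Your sign and orientation bookkeeping matches the paper's, so no gap remains.
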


\begin{proof}
Let $F=\alpha^{-1}(\Xi)$ be the Bers potential for $\Phi $ with cocycle $\Xi$. Then
	\begin{align*}\notag
	\iint_{\D}\Theta \, \overline{\Phi}\,\PM^{-N}\omega&
	=\frac{1}{\tpi}\iint_{\D}d\left( \Theta \,
	F \right)
	=\frac{1}{\tpi}\sum_{a=1}^{g}\oint_{\Con{a}}\Theta \,\Xi[\gamma_{a}],
	\end{align*}
much as in \eqref{eq:zeroStokes}. 
	%
	%
\end{proof}
\begin{remark}\label{rem:Xirep}
   $\sum_{a=1}^{g}\oint_{\Con{a}}\Theta\,\Xi_{P}[\gamma_{a}]=0$ for any coboundary cocycle $\Xi_{P}$.
\end{remark}
\begin{corollary}\label{cor:PhiCont}
	Let $\{\Phi _{s}\}_{s=1}^{d_{N}}$  be a $\HgN$-basis and let $\{\Phi ^{\vee}_{r}\}_{r=1}^{d_{N}}$ be the Petersson dual basis. For  $ r,s=1,\ldots,d_{N}$ we have\footnote{We note that there  appears to be a sign error  in  (4.1) of \cite{McIT}.}
	\begin{align}
	\frac{1}{\tpi}\sum_{a=1}^{g}\oint_{\Con{a}}\Phi ^{\vee}_{r}\,\Xi_{s}[\gamma_{a}]=\delta_{rs},
	\label{eq:intPhiXi}
	\end{align}
	where $\Xi_{s}=\beta \left(\Phi _{s}\right)$ is any cocycle representative associated with $\Phi_{s}$.
\end{corollary}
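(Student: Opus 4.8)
The plan is to derive \eqref{eq:intPhiXi} as an immediate specialization of Proposition~\ref{prop:ThetaPhi}, the only real work being to match the objects appearing there with the ones in the statement through the commutative diagram \eqref{eq:maps}. Recall that Proposition~\ref{prop:ThetaPhi} asserts, for any $\Xi\in\Ei$ with associated form $\Phi=(\epsilon\circ\alpha^{-1})(\Xi)$, the equality $\langle\Theta,\Phi\rangle=\frac{1}{\tpi}\sum_{a=1}^{g}\oint_{\Con{a}}\Theta\,\Xi[\gamma_{a}]$ for every $\Theta\in\HgN$. Since the Petersson dual basis is characterized by $\langle\Phi^{\vee}_{r},\Phi_{s}\rangle=\delta_{rs}$, it suffices to take $\Theta=\Phi^{\vee}_{r}$ and $\Xi=\Xi_{s}$ and to check that the corresponding $\Phi$ is exactly $\Phi_{s}$.

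The key step is therefore to verify $(\epsilon\circ\alpha^{-1})(\Xi_{s})=\Phi_{s}$ for any cocycle $\Xi_{s}$ representing the class $\beta(\Phi_{s})\in\COH$. Writing $F_{s}=\alpha^{-1}(\Xi_{s})$ for the Bers potential with cocycle $\Xi_{s}$, commutativity of \eqref{eq:maps} gives $\beta(\epsilon(F_{s}))=\delta(\alpha(F_{s}))=\delta(\Xi_{s})=\beta(\Phi_{s})$, where the last equality holds because $\Xi_{s}$ lies in the class $\beta(\Phi_{s})$. As $\beta$ is bijective by Remark~\ref{rem:Bersmap}, we conclude $\epsilon(F_{s})=\Phi_{s}$, so that $F_{s}$ is indeed a Bers potential for $\Phi_{s}$. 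This identification is moreover insensitive to the choice of representative: two representatives differ by a coboundary $\Xi_{P}$ with $P\in\HON$, and $\epsilon(\alpha^{-1}(\Xi_{P}))=\epsilon(P)=0$ by Lemma~\ref{lem:ZeroPot}, which is the cohomological counterpart of the vanishing recorded in Remark~\ref{rem:Xirep}.

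With this in hand, applying Proposition~\ref{prop:ThetaPhi} with $\Theta=\Phi^{\vee}_{r}$, $\Xi=\Xi_{s}$ and $\Phi=\Phi_{s}$ yields
\[
\frac{1}{\tpi}\sum_{a=1}^{g}\oint_{\Con{a}}\Phi^{\vee}_{r}\,\Xi_{s}[\gamma_{a}]=\langle\Phi^{\vee}_{r},\Phi_{s}\rangle=\delta_{rs},
\]
as claimed. I expect the diagram chase of the second paragraph to be the only subtle point; everything else is a direct substitution into Proposition~\ref{prop:ThetaPhi} together with the definition of the dual basis, so no new estimates or constructions are required.
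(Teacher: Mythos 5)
Your proposal is correct and follows exactly the route the paper intends: the corollary is a direct specialization of Proposition~\ref{prop:ThetaPhi} with $\Theta=\Phi^{\vee}_{r}$ and $\Xi=\Xi_{s}$, with independence of the choice of cocycle representative supplied by Remark~\ref{rem:Xirep} (equivalently, your observation that $\epsilon(\alpha^{-1}(\Xi_{P}))=0$ by Lemma~\ref{lem:ZeroPot}). The diagram chase verifying $(\epsilon\circ\alpha^{-1})(\Xi_{s})=\Phi_{s}$ is a careful justification of what the paper treats as immediate.
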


Let $\{\Phi _{s}\}_{s=1}^{d_{N}}$  be a $\HgN$-basis with  potentials $\{F_{s}\}_{s=1}^{d_{N}}$ and cocycles $\{\Xi_{s}\}_{s=1}^{d_{N}}$ i.e. $\Xi_{s}$ is a \emph{particular}  cocycle representative associated with $\Phi_{s}$. Recall there exists a  corresponding   GEM form $\GEM_{N}$ of \eqref{eq:GEMdef} determining the potentials $\{F_{s}\} _{s=1}^{d_{N}}$ in \eqref{eq:Bers_pot}.
We then find
\begin{proposition}\label{prop:GEMCont}
$\GEM_{N} $ and the cocycles $\{\Xi_{s}\} _{s=1}^{d_{N}}$ obey
		\begin{align}
		\label{eq:intpsixi}
	\frac{1}{\tpi}	\sum_{a=1}^{g}\oint_{\Con{a}}\GEM_{N}(\cdot,y)\Xi_{s}[\gamma_{a}](\cdot)
=\Xi_{s}[\lambda](y),
	\end{align}
where  $y\in\Omo$ and $\lambda$ is the unique Schottky group element such that $\lambda y\in\D$.
\end{proposition}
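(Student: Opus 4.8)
\section*{Proof proposal}

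The plan is to run the same Stokes'-theorem computation that proves Proposition~\ref{prop:ThetaPhi}, but with the holomorphic test form $\Theta$ replaced by the meromorphic form $\GEM_{N}(\cdot,y)$. Write $I(y)$ for the left-hand side of \eqref{eq:intpsixi} and let $F_{s}=\alpha^{-1}(\Xi_{s})\in\FN$ be the Bers potential for $\Phi_{s}$ determined by $\Xi_{s}$, so that \eqref{eq:xiF} reads $F_{s}|_{\gamma_{a}}-F_{s}=\Xi_{s}[\gamma_{a}]$ and \eqref{eq:Bers_pot} gives $\langle\GEM_{N}(\cdot,y),\Phi_{s}\rangle=-F_{s}(y)$. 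As a form in $x$ on $\D$, $\GEM_{N}(x,y)$ is holomorphic except for a single simple pole: its poles sit at $x=\gamma^{-1}y$ for $\gamma\in\Gamma$, and since $\lambda$ is the unique group element with $\lambda y\in\D$, the only pole inside $\D$ is at $x=\lambda y$. First I would therefore apply Stokes' theorem to the $1$-form $\eta:=\GEM_{N}(\cdot,y)\,F_{s}$ on $\D\setminus D_{\epsilon}$, where $D_{\epsilon}$ is a small disc about $x=\lambda y$.

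The interior term is handled exactly as in \eqref{eq:zeroStokes}: using the Bers equation \eqref{eq:dPhiF} (equivalently \eqref{eq:dF}), one finds $\frac{1}{\tpi}d\eta=\GEM_{N}(\cdot,y)\,\overline{\Phi_{s}}\,\PM^{-N}\omega$ away from the pole, so that $\frac{1}{\tpi}\iint_{\D\setminus D_{\epsilon}}d\eta\to\langle\GEM_{N}(\cdot,y),\Phi_{s}\rangle=-F_{s}(y)$ as $\epsilon\to0$ (the $|x-\lambda y|^{-1}$ singularity is integrable in two real dimensions, so nothing is lost in the limit). For the outer boundary I would collapse $\sum_{a\in\I}\oint_{\Con{a}}\eta$ into a sum over $\Ip$ using $\Con{-a}=-\gamma_{a}\Con{a}$ together with the $x$-invariance $\GEM_{N}(\gamma_{a}x,y)=\GEM_{N}(x,y)$ from \eqref{eq:GEMBersx} and the quasiperiodicity $F_{s}|_{\gamma_{a}}=F_{s}+\Xi_{s}[\gamma_{a}]$; the $F_{s}$ contributions cancel in pairs and what survives is exactly $-\sum_{a=1}^{g}\oint_{\Con{a}}\GEM_{N}(\cdot,y)\,\Xi_{s}[\gamma_{a}]=-\tpi\,I(y)$, reproducing the structure of \eqref{eq:zeroStokes}.

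The remaining ingredient is the residue at $x=\lambda y$, which is where the quasiperiodic shift on the right-hand side enters. Near this point only the $\gamma=\lambda^{-1}$ term of the Poincar\'e series \eqref{eq:GEMBers} is singular (the subtracted holomorphic piece in \eqref{eq:GEMdef} contributes nothing), and the product of the Jacobian $d(\lambda^{-1}x)^{N}$ with the local behaviour of $(\lambda^{-1}x-y)^{-1}$ combines, after using $(\lambda^{-1})'(\lambda y)=1/\lambda'(y)$ and continuity of $F_{s}$, to give $\operatorname{Res}_{x=\lambda y}\eta=f_{s}(\lambda y)\,(\lambda'(y))^{1-N}dy^{1-N}=F_{s}|_{\lambda}(y)$. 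Hence $\frac{1}{\tpi}\oint_{\partial D_{\epsilon}}\eta\to F_{s}|_{\lambda}(y)$. Assembling the three pieces with the orientation convention $\oint_{\partial\D}=-\sum_{a\in\I}\oint_{\Con{a}}$ used in \eqref{eq:zeroStokes} yields $-F_{s}(y)=I(y)-F_{s}|_{\lambda}(y)$, i.e. $I(y)=F_{s}|_{\lambda}(y)-F_{s}(y)=\Xi_{s}[\lambda](y)$ by \eqref{eq:xiF}, as claimed.

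The step I expect to need the most care is the residue computation: one must verify that the weight factors $(\lambda')^{N}$ from $d(\lambda^{-1}x)^{N}$ and $(\lambda')^{-1}$ from the simple zero of $\lambda^{-1}x-y$ conspire precisely to reconstitute the weight-$(1-N)$ action $F_{s}\mapsto F_{s}|_{\lambda}$, so that the bare residue is literally $F_{s}|_{\lambda}(y)$ rather than merely proportional to it. One also needs the clean geometric input that $\lambda y$ is the unique pole of $\GEM_{N}(\cdot,y)$ in $\D$ and lies in its interior, which is exactly what singles out the group element $\lambda$ on the right of \eqref{eq:intpsixi}; all other poles $x=\gamma^{-1}y$ lie in neighbouring copies of the fundamental domain and are invisible to the contour. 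As a consistency check one may instead substitute \eqref{eq:GEMdef} and use Corollary~\ref{cor:PhiCont} to eliminate the $\sum_{r}\Phi^{\vee}_{r}(x)P_{r}(y)$ term, reducing \eqref{eq:intpsixi} to the corresponding statement for $\GEMBers_{N}$ and $F^{\Bers}_{s}$.
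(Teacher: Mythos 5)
Your proposal is correct and follows essentially the same route as the paper: Stokes' theorem applied to $\GEM_{N}(\cdot,y)F_{s}$ on $\D$ with a small disc excised about the unique pole at $x=\lambda y$, identifying the area term with $-F_{s}(y)$ via \eqref{eq:Bers_pot}, the outer boundary with $-\tpi\, I(y)$ via the cocycle relation, and the inner boundary with $F_{s}(\lambda y)=F_{s}|_{\lambda}(y)$. The residue bookkeeping you flag as delicate is handled in the paper by the explicit local parameterization $x=\lambda y+\varepsilon e^{\im\theta}$, and your final assembly $I(y)=F_{s}|_{\lambda}(y)-F_{s}(y)=\Xi_{s}[\lambda](y)$ matches the paper's conclusion exactly.
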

\begin{proof}
$\GEM_{N}(x,y)$ has a unique simple pole at 
$x=\yD$ for $x\in \D$ using \eqref{eq:GEMBers}. 
Let $\D_{\varepsilon}=\D-\Delta_{\varepsilon}$ where $\Delta_{\varepsilon}$ is a disc of arbitrarily small radius $\varepsilon$ centered at $\yD$ with oriented boundary $C_{\varepsilon}$. Parameterizing $\Delta_{\varepsilon}$ by $x=\yD+re^{\im\theta}$ for $0\le r\le \varepsilon$ and $0\le\theta\le 2\pi$ we find $\GEM_{N}(x,y)\omega =\left(\rho(\yD)+O(\varepsilon)\right)dr\wedge d\theta$ for $x\in\Delta_{\varepsilon}$ so that \eqref{eq:Bers_pot} implies the Bers potential $F_{s}=\alpha^{-1}\left(\Xi_{s}\right) $  for $\Phi_{s}$  is given by
\begin{align*}
{F_{s}}(y)&=
-\lim_{\varepsilon\rightarrow 0}
\iint_{\D_{\varepsilon}}\GEM_{N}(\cdot,y)\,
\overline{\Phi}_{s}\,\PM^{-N}\omega.
\end{align*}
Similarly to \eqref{eq:dPhiF}, we find  that for fixed $y$
\begin{align*}
\frac{1}{\tpi}d\left(\GEM_{N}(\cdot,y)F_{s} \right)
=\GEM_{N}(\cdot,y)\overline{\Phi}_{s}\,\PM^{-N}\omega,
\end{align*} 
on $\D-\{\lambda y\}\supset \D_{\varepsilon}$.
Then Stokes' theorem implies
\begin{align*}
-\iint_{\D_{\varepsilon}}\GEM_{N}(\cdot,y)\,
\overline{\Phi}_{s}\,\PM^{-N}\omega
=&
\frac{1}{\tpi}\sum_{a\in\I}\oint_{\Con{a}}\GEM_{N}(\cdot,y)F_{s} 
+\frac{1}{\tpi}\oint_{\mathcal{C}_{\varepsilon}}\GEM_{N}(\cdot,y)F_{s} 
\\
=&
-\frac{1}{\tpi}\sum_{a=1}^{g}\oint_{\Con{a}}\GEM_{N}(\cdot,y)\Xi_{s}[\gamma_{a}]
+\frac{1}{\tpi}\oint_{\mathcal{C}_{\varepsilon}}\GEM_{N}(\cdot,y)F_{s} .
\end{align*}
Let $x=\yD+\varepsilon e^{\im\theta}$ on  $C_{\varepsilon}$ so that  $\GEM_{N}(x,y)F_{s} (x) =\im \left(F_{s}(\yD)+O(\varepsilon)\right)  d\theta$ implying
\begin{align*}
\lim_{\varepsilon\rightarrow 0}\frac{1}{\tpi}\oint_{\mathcal{C}_{\varepsilon}}\GEM_{N}(\cdot,y)F_{s} =
F_{s} (\yD).
\end{align*}
Combining these identities we find
\begin{align*}
\frac{1}{\tpi}\sum_{a=1}^{g}\oint_{\Con{a}}\GEM_{N}(\cdot,y)\Xi_{s}[\gamma_{a}]=
F_{s} (\yD)- F_{s}(y)=\Xi_{s}[\lambda](y).
\end{align*}
\end{proof}
\begin{remark}
	\label{rem:KNhatF}
Since $\Xi_{s}[\id]=0$, Proposition~\ref{prop:GEMCont} implies that  for   $y\in\D$  we have
\begin{align}\label{eq:GEMCont0}
\sum_{a=1}^{g}\oint_{\Con{a}}\GEM_{N}(\cdot,y)\Xi_{s}[\gamma_{a}]=0.
\end{align}
Conversely, \eqref{eq:GEMCont0} implies \eqref{eq:intpsixi} using   \eqref{eq:GEMgamy},  \eqref{eq:chigam} and \eqref{eq:intPhiXi}.
\end{remark}

\subsection{A canonically normalized basis for $\HgN$}\label{sect:normalized basis}
We define a canonical $\Ei$ homology basis $\{\Xi_{ak}\}$ for $a\in \Ip$ and $k=0,\ldots,2N-2$ determined by their evaluation on the marked Schottky group generating set,	$\{\gamma_{b}\}$ for $b\in\Ip$, as follows
\begin{align}
\label{eq:xinorm}
\Xi_{ak}[\gamma_{b}](z):=\delta_{ab}z_{a}^{k}\, dz^{1-N},
\end{align}
with  $z_{a}=z-w_a  $ for Schottky parameter $w_{a}$. 
For  any given $P\in\HON$ with coboundary $\Xi_{P}$  we may write 
\[
\Xi_{P}[\gamma_{a}](z)=\sum_{k=0}^{2N-2}p_{ak} z_{a}^k \, dz^{1-N},\quad  a \in\Ip, 
\] 
for  some complex coefficients $p_{ak}$ determined by $P$.  Hence it follows that  
\begin{align}
\label{eq:cocyclerels}
\Xi_{P}=\sum_{a=1}^{g}\sum_{k=0}^{2N-2}p_{ak}\Xi_{ak},
\end{align}
since they coincide on each generator $\gamma_{a}$. Thus there are $2N-1$ linear relations modulo coboundaries on the set $\{\Xi_{ak}\}$ (cf. Lemma~\ref{lem:dimH}). 
Let $\Phi_{ak}=\left( \epsilon \circ\alpha^{-1} \right) \left(\Xi_{ak}\right)\in\HgN$. 
Since $\left( \epsilon \circ\alpha^{-1} \right) \left(\Xi_{P}\right)=0$ there are $2N-1$ linear relations on the set $\{\Phi_{ak}\}$ given by
\begin{align}
\label{eq:Phidep}
 \sum_{a=1}^{g}\sum_{k=0}^{2N-2}\overline{p_{ak}}\Phi_{ak}=0.
\end{align}
Let $\{\Phi_{ak}\}_{\J}$ denote any $\HgN$-basis indexed by $ (a,k)\in \J$ where $\J$ is  a set, of cardinality $d_{N}$, of distinct $(a,k)$ values with $a=1,\ldots, g$ and $k=0,\ldots, 2N-2$
 i.e. the cocycles $\{\Xi_{ak}\}_{\J}$ are independent modulo coboundaries.
We let  $\{\Phi^{\vee}_{ak}\}_{\J}$ denote the Petersson dual basis and let 
$\GEM_{N}^{{\Can}}$ be the corresponding  GEM form  \eqref{eq:GEMdef}.
Then   \eqref{eq:chigam},  Corollary~\ref{cor:PhiCont} and Remark~\ref{rem:KNhatF} imply
\begin{proposition}\label{prop:Can_norm}
A $\HgN$-basis $\{\Phi _{ak}\}_{\J}$ with canonical cocycles $\{\Xi_{ak}\}_{\J}$, 
Petersson dual basis $\{\Phi^{\vee}_{bl}\}_{\J}$ and GEM form $\GEM_{N}^{{\Can}}$ obeys
\begin{align}
\oint_{\Con{a}}\GEM_{N}^{{\Can}}(x,y)\Xi_{ak}[\gamma_{a}](x)
&=0,
\label{eq:CanPsi_norm}
\\
\GEM_{N}^{{\Can}}(x,y)-\GEM_{N}^{{\Can}}( x,\gamma_{a} y) 
&=
\sum_{(a,k)\in \J}\Phi ^{\vee}_{ak}(x)\Xi_{ak}[\gamma_{a}](y),
\label{eq:CanPsiQPeriod}
\\
\frac{1}{\tpi}\oint_{\mathcal{C}_a}\Phi^{\vee}_{bl}(x) \Xi_{ak}[\gamma_{a}](x)&=
\delta_{ab}\delta_{kl},
\label{eq:CanPhi_norm}
\end{align} 
for all $y\in\D$ and $(a,k),(b,l)\in\J$.
\end{proposition}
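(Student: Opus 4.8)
The plan is to obtain each of the three identities \eqref{eq:CanPsi_norm}, \eqref{eq:CanPsiQPeriod} and \eqref{eq:CanPhi_norm} by specializing to the canonical cocycle basis $\{\Xi_{ak}\}_{\J}$ a general identity already established for an arbitrary $\HgN$-basis. The single unifying device is the diagonal structure of the canonical cocycles recorded in \eqref{eq:xinorm}, namely $\Xi_{ak}[\gamma_{b}](z)=\delta_{ab}z_{a}^{k}\,dz^{1-N}$: whenever an earlier result appears as a sum over the Schottky generators of the shape $\sum_{b=1}^{g}\oint_{\Con{b}}(\cdots)\Xi_{ak}[\gamma_{b}]$, every term with $b\neq a$ vanishes and only the $b=a$ contribution survives. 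First I would record that the hypotheses are consistent: since $(a,k)$ ranges over $\J$ the cocycles $\{\Xi_{ak}\}_{\J}$ are independent modulo coboundaries, so $\{\Phi_{ak}\}_{\J}$ is a genuine $\HgN$-basis, the Petersson dual $\{\Phi^{\vee}_{ak}\}_{\J}$ exists, and the GEM form $\GEM_{N}^{\Can}$ of \eqref{eq:GEMdef} is well defined.

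For \eqref{eq:CanPsi_norm} I would apply \eqref{eq:GEMCont0} of Remark~\ref{rem:KNhatF} with $\Xi_{s}=\Xi_{ak}$ and $y\in\D$. Its left-hand side is $\sum_{b=1}^{g}\oint_{\Con{b}}\GEM_{N}^{\Can}(\cdot,y)\Xi_{ak}[\gamma_{b}]$, and by the diagonal property only the $b=a$ term remains, which is exactly \eqref{eq:CanPsi_norm}. For \eqref{eq:CanPsiQPeriod} I would start from the quasiperiodicity \eqref{eq:GEMgamy} combined with \eqref{eq:chigam} for the basis $\{\Phi_{ak}\}_{\J}$, giving $\GEM_{N}^{\Can}(x,\gamma y)-\GEM_{N}^{\Can}(x,y)=-\sum_{(b,k)\in\J}\Phi^{\vee}_{bk}(x)\Xi_{bk}[\gamma](y)$. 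Setting $\gamma=\gamma_{a}$ and using $\Xi_{bk}[\gamma_{a}]=\delta_{ba}z_{a}^{k}\,dz^{1-N}$ collapses the sum to the terms with $b=a$; moving $\GEM_{N}^{\Can}(x,\gamma_{a}y)$ across then yields the right-hand side of \eqref{eq:CanPsiQPeriod}, the minus sign of \eqref{eq:chigam} having been absorbed.

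For \eqref{eq:CanPhi_norm} I would invoke Corollary~\ref{cor:PhiCont} with $\Phi^{\vee}_{r}=\Phi^{\vee}_{bl}$ and $\Xi_{s}=\Xi_{ak}$, namely $\frac{1}{\tpi}\sum_{c=1}^{g}\oint_{\Con{c}}\Phi^{\vee}_{bl}\,\Xi_{ak}[\gamma_{c}]=\delta_{(b,l),(a,k)}$; the diagonal property again annihilates all $c\neq a$ terms, leaving $\frac{1}{\tpi}\oint_{\Con{a}}\Phi^{\vee}_{bl}\,\Xi_{ak}[\gamma_{a}]=\delta_{ab}\delta_{kl}$, which is \eqref{eq:CanPhi_norm}. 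The content of all three steps is thus purely organizational and I expect no genuine analytic obstacle; the only point demanding care is the index bookkeeping — in particular matching the generator label $a$ in $\gamma_{a}$ of \eqref{eq:CanPsiQPeriod} with the summation index so that the reduction of $\sum_{(b,k)\in\J}$ to its $b=a$ part is legitimate, and applying \eqref{eq:intPhiXi} with the correct normalization $\frac{1}{\tpi}$.
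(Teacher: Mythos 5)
Your proposal is correct and follows essentially the same route as the paper, whose proof consists precisely of citing \eqref{eq:chigam}, Corollary~\ref{cor:PhiCont} and Remark~\ref{rem:KNhatF}; you have simply made explicit the collapsing of the sums over generators via the diagonal property \eqref{eq:xinorm}, which is exactly the intended argument.
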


\begin{remark}\label{rem:basis}
Proposition~\ref{prop:Can_norm} is a natural generalization of the properties of the classical differential of the third kind 
\begin{align*}
\omega_{y-0}(x):=\sum_{\gamma\in\Gamma} 
\left(\frac{1}{\gamma x-y}-\frac{1}{\gamma x}\right)d(\gamma x),
\end{align*}
which is a 1-differential is $x$ and a 0-differential in $y$ where $\omega_{y-0}(x)-\omega_{\gamma_{a}y-0}(x)=\nu_{a}(x)$  
for holomorphic 1-differential $\nu_{a}$ normalized by $\oint_{\Con{a}}\nu_{b}=\tpi\delta_{ab}$. 
However, $\GEM_{N}(x,y)$ has a unique simple pole at $x=y$ whereas $\omega_{y-0}(x)$ has an additional simple pole  at $x=0$ which is why we excluded the case $N=1$ in the above discussion.
\end{remark}

\section{Variation of the Riemann Surface Moduli} 
We consider Bers potentials, holomorphic  differentials and GEM functions for $N=2$ where the structures discussed in \S2 have a particular geometric significance.  
The existence of a complex structure on a Riemann surface is equivalent to that of a Riemannian metric with line element
\begin{align*}
ds^2  \sim| dz +\mu(z,\zbar)\, d\zbar|^{2},
\end{align*} 
for local coordinates $z,\zbar$ where $|\mu|<1$ and  $B(z,\zbar):=\mu(z,\zbar)dz^{-1} d\zbar\in {\mathcal A} _{-1,1}$  e.g. \cite{GL}.
 $B(z,\zbar)$ is  called a Beltrami differential. The metric can be transformed locally to coordinates $w,\overline{w}$  where $ds^{2} \sim|dw| ^{2}$ provided $w(z,\zbar)$ satisfies the Beltrami equation 
\begin{align}
\label{eq:beltrami}
\partial_{\zbar}w=\mu\partial_{z}w.
\end{align}
The mapping $z\rightarrow w(z)$ is called a quasiconformal map.  
Define $\phi =\overline{\mu}\rho$, for the Poincar\'e metric $\rho$, so that  $\Phi(z)=\phi(z) dz^2\in {\mathcal A} _{2,0}$. $B(z,\zbar)$   is called an harmonic Beltrami differential  when $\partial_{\zbar}\phi=0$ so that $\Phi(z)\in \Hgtwo$, the space of holomorphic quadratic differentials. 
There is a 1-1 map between the infinitesimal variations of $\Mg$  and the space of harmonic Beltrami differentials  i.e. a bijective antilinear map between the moduli tangent space $T(\Mg)$ and $\Hgtwo$ e.g. \cite{A}.

We may explicitly realise  these ideas in the Schottky uniformization as follows. Consider a small variation in a Schottky parameter $m\rightarrow m+\varepsilon_{m}$ with corresponding  quasiconformal map given by $z\rightarrow w(z,\zbar,\varepsilon_{m})$ where
\begin{align}\label{eq:waz}
w=z+\frac{\varepsilon_{m}}{\pi}f_{m}+O(\varepsilon_{m}^2),
\end{align}
for some\footnote{The factor of $\frac{1}{\pi}$ is introduced to comply with our Bers potential definition \eqref{eq:dF}.} $f_{m}(z,\zbar)$. 
\eqref{eq:beltrami} implies that $\mu=\varepsilon_{m}\mu_{m}+O(\varepsilon_{m}^2)$ where
\[
\mu_{m}=\frac{1}{\pi}\partial_{\zbar}f_{m} .
\]
Thus for an harmonic Beltrami differential 
$F_{m}=f_{m}(z)dz^{-1}\in \mathcal{F}_{2}$ is a Bers potential for $\Phi_{m}=\overline{\mu}_{m}\rho dz^{2}\in\Hgtwo$ from \eqref{eq:dF} with $N=2$. 
The deformed Riemann surface is uniformized with a Schottky group $\Gamma_{\varepsilon_{m}}$  where for each  $\gamma\in\Gamma$ we define $\gamma_{\varepsilon_{m}}\in\Gamma_{\varepsilon_{m}}$ via the compatibility condition: $
\gamma_{\varepsilon_{m}}w(z)=w(\gamma z)$. 
But $\gamma_{\varepsilon_{m}}z=\gamma z+\varepsilon_{m} \partial_{m}(\gamma z)+O(\varepsilon_{m}^2)$ and using \eqref{eq:waz} we find \cite{EO,Ro,P} 
\begin{align}\label{eq:delmgamz}
\partial_{m}(\gamma z)=&\frac{1}{\pi}\left(f_{m}(\gamma z)- f_{m}(z)(\gamma z)'  \right)
=\frac{1}{\pi}\Xi_{m}[\gamma](z)d(\gamma z),
\end{align}
where $\Xi_{m}$ denotes  the 1-cocycle for potential $F_{m}$. 
Define the following  $T(\Cg)$ basis:
\begin{align}
\label{eq:delael}
\partial_{a,0}:=\partial_{w_{a}},\quad
\partial_{a,1}:=\rho_{a} \partial_{\rho_{a}},\quad
\partial_{a,2}:=\rho_{a} \partial_{w_{-a}},
\end{align}
for $a\in\Ip$ and $\ell=0,1,2$.
For each generator $\gamma_{b}\in\Gamma $ of \eqref{eq:gamaz} we have
\begin{align*}
\partial_{a,k}(\gamma_{b}z)=\rho_{a}(z-w_{a})^{k-2}\delta_{ab}=-\Xi_{ak}[\gamma_{b}](z)d(\gamma_{b}z),
\end{align*}
where $\Xi_{a\ell}$ is the canonical cocycle basis of \eqref{eq:xinorm} for $N=2$. 
Thus we find that 
\begin{align}\label{eq:delgamz}
	\partial_{a,\ell}(\gamma z)=-\Xi_{a\ell}[\gamma ](z)d(\gamma z),
	\end{align}
for all $\gamma\in\Gamma$. 
We have therefore established a natural pairing of $\partial_{a,\ell}$ with $\Xi_{a\ell}$. In conjunction with Proposition~\ref{prop:BersMapalpha} this implies
\begin{proposition}\label{prop:TCHm1}
 $\mathcal{F}_{2}\simeq \Eitwo\simeq T(\Cg)$  as vector spaces. 
\end{proposition}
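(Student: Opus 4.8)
The plan is to chain two isomorphisms. The first, $\mathcal{F}_{2}\simeq\Eitwo$, is nothing but Proposition~\ref{prop:BersMapalpha} specialised to $N=2$, so no further argument is required there; the map is the linear $\alpha$ of \eqref{eq:xiF}. All the content lies in producing the second isomorphism $\Eitwo\simeq T(\Cg)$, and the strategy is to exhibit explicit bases on the two sides, match them, and observe that the pairing \eqref{eq:delgamz} realises this matching canonically.

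First I would pin down the dimension and a basis of $\Eitwo$. By the counting in the proof of Lemma~\ref{lem:dimH} with $N=2$, a cocycle is determined by its values on the $g$ free generators $\{\gamma_{a}\}$, each value lying in $\HOtwo$ with $\dim\HOtwo=3$, so $\dim\Eitwo=3g$. The canonical cocycles $\{\Xi_{a\ell}\}_{a\in\Ip,\,\ell=0,1,2}$ of \eqref{eq:xinorm} form a basis of this space: by construction $\Xi_{a\ell}[\gamma_{b}]=\delta_{ab}\,z_{a}^{\ell}\,dz^{-1}$, and since $\{z_{a}^{\ell}dz^{-1}\}_{\ell=0}^{2}$ is a basis of $\HOtwo$, assigning an arbitrary value in $\HOtwo$ to each generator is achieved by a unique linear combination of the $\Xi_{a\ell}$. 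It is worth stressing that these $3g$ cocycles span the full cocycle space $\Eitwo$, even though they satisfy $2N-1=3$ relations once one passes to the cohomology $\COHtwo$.

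Next I would identify $T(\Cg)$. Since $\Cg$ is an open subset of $\C^{3g}$ by \eqref{eq:Cfrakg}, its tangent space at any point is $\C^{3g}$, with coordinate frame $\{\partial_{w_{a}},\partial_{\rho_{a}},\partial_{w_{-a}}\}_{a\in\Ip}$. The vector fields $\{\partial_{a,\ell}\}$ of \eqref{eq:delael} are obtained from this frame by the rescalings $\partial_{\rho_{a}}\mapsto\rho_{a}\partial_{\rho_{a}}$ and $\partial_{w_{-a}}\mapsto\rho_{a}\partial_{w_{-a}}$, which are invertible because $\rho_{a}\neq0$ throughout $\Cg$; hence $\{\partial_{a,\ell}\}_{a\in\Ip,\,\ell=0,1,2}$ is again a basis and $\dim T(\Cg)=3g$.

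Finally I would define the linear map $T(\Cg)\to\Eitwo$ by $\partial_{a,\ell}\mapsto\Xi_{a\ell}$, extended linearly. As it carries one basis bijectively to another it is an isomorphism, giving $\Eitwo\simeq T(\Cg)$ and hence the full chain. The identity \eqref{eq:delgamz}, namely $\partial_{a,\ell}(\gamma z)=-\Xi_{a\ell}[\gamma](z)\,d(\gamma z)$, shows this identification is the natural one: the cocycle attached to a tangent vector is exactly the infinitesimal deformation of the Schottky generators that the vector field induces. There is no serious obstacle beyond this bookkeeping; the only points requiring care are the two noted above---that the $\rho_{a}$-rescalings producing $\{\partial_{a,\ell}\}$ are invertible, and that $\{\Xi_{a\ell}\}$ must be recognised as a basis of the cocycle space $\Eitwo$ rather than merely as a generating set for cohomology classes.
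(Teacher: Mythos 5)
Your proposal is correct and follows essentially the same route as the paper, which derives the result from Proposition~\ref{prop:BersMapalpha} together with the pairing $\partial_{a,\ell}\leftrightarrow\Xi_{a\ell}$ established by \eqref{eq:delgamz}; you merely make explicit the bookkeeping (that $\{\Xi_{a\ell}\}$ is a basis of the full cocycle space $\Eitwo$ and that $\{\partial_{a,\ell}\}$ is a basis of $T(\Cg)$ because $\rho_{a}\neq 0$) that the paper leaves implicit.
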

By Lemma~\ref{lem:dimcobound} the  coboundary space $\Boundtwo\simeq \HOtwo$.
$\HOtwo$ is trivially isomorphic as a vector space to the M\"obius $\slLie_{2}(\C)$  Lie algebra where   $P=p(z)dz^{-1}\in \HOtwo$ for $p\in\Pi_{2}$ is  identified with $-p(z)\partial_{z}$.
By \eqref{eq:delgamz}, the $\Boundtwo$  element 
$\Xi_{P}=\sum_{\ell=0}^{2}\sum_{a\in\Ip}p_{a\ell}\Xi_{a\ell}$ of \eqref{eq:cocyclerels} is paired with $\mathcal{L}_{P}^{\Cg}\in T(\Cg)$ given by
\begin{align}
\mathcal{L}_{P}^{\Cg}:&=-\sum_{\ell=0}^{2}\sum_{a\in\Ip}p_{a\ell}\partial_{a,\ell}
=-\sum_{a\in\I}p(W_{a})\partial_{W_{a}},\label{eq:LP}
\end{align}
for the  original Schottky parameters $W_{\pm a}$. Thus $\{\mathcal{L}_{P}^{\Cg}\}$ generates the $\slLie_{2}(\C)$ subalgebra of $T(\Cg)$ associated with the M\"obius action \eqref{eq:Mobwrhoa}.
%
In summary, we have the following vector space isomorphisms:
\begin{align}
\label{eq:cobound2}
\HOtwo \simeq   \Boundtwo \simeq  \slLie_{2}(\C)\subset T(\Cg).
\end{align}
Recalling that $\Schg=\Cg/\SL_{2}(\C)$ with tangent space $T(\Schg)=T(\Mg)$ we may consider the relevant quotients using Proposition~\ref{prop:TCHm1} and \eqref{eq:cobound2} to find
\begin{proposition}
	\label{prop:IsVS}
$  \Hgtwo\simeq \COHtwo \simeq T(\Mg)$ as vector spaces.
\end{proposition}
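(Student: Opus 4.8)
The plan is to assemble the claimed chain of isomorphisms entirely from the two structural results already established in this section, namely Proposition~\ref{prop:TCHm1} and the identification~\eqref{eq:cobound2}, by passing to the appropriate quotients; no genuinely new analytic input is required. For the first isomorphism $\Hgtwo\simeq\COHtwo$ I would simply invoke the $N=2$ case of the Bers map. Remark~\ref{rem:Bersmap} already records that $\beta\colon\HgN\to\COH$ is bijective: Proposition~\ref{prop:BersMapalpha} gives $\mathcal{F}_2\simeq\Eitwo$ via $\alpha$, and the kernel relations~\eqref{eq:kernels} state $\ker\epsilon=\HOtwo$ and $\ker\delta=\alpha(\ker\epsilon)$, so that $\epsilon$ and $\delta$ descend to a bijection on the quotients $\mathcal{F}_2/\HOtwo\to\Eitwo/\Boundtwo$. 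Specializing to $N=2$ gives $\Hgtwo\simeq\COHtwo$ with no further work.

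The substantive step is the second isomorphism $\COHtwo\simeq T(\Mg)$, where the geometric content of the section enters. Proposition~\ref{prop:TCHm1} supplies a vector-space isomorphism $\Eitwo\simeq T(\Cg)$, realized explicitly through the pairing $\partial_{a,\ell}\leftrightarrow\Xi_{a\ell}$ of~\eqref{eq:delgamz}. Under this isomorphism I would track the image of the coboundary subspace $\Boundtwo\subset\Eitwo$. By~\eqref{eq:cobound2}, together with the identification~\eqref{eq:LP} of a coboundary cocycle $\Xi_P$ with the vector field $\mathcal{L}_{P}^{\Cg}$, the subspace $\Boundtwo$ is carried precisely onto the $\slLie_{2}(\C)$ subalgebra of $T(\Cg)$ generated by the M\"obius action~\eqref{eq:Mobwrhoa}. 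Since $\COHtwo=\Eitwo/\Boundtwo$ by definition, and since $T(\Mg)=T(\Schg)=T(\Cg)/\slLie_{2}(\C)$ because $\Schg=\Cg/\SL_{2}(\C)$ is a covering of $\Mg$ and hence a local diffeomorphism inducing tangent-space isomorphisms, the isomorphism $\Eitwo\simeq T(\Cg)$ sends numerator onto numerator and subspace onto subspace, and therefore induces an isomorphism on the quotients.

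The step requiring the most care is verifying that the isomorphism of Proposition~\ref{prop:TCHm1} genuinely \emph{restricts} to the coboundary/Lie-algebra correspondence of~\eqref{eq:cobound2}, so that the induced quotient map is well defined and injective rather than a mere dimension count: one must confirm that the span $\{\mathcal{L}_{P}^{\Cg}\}$ is carried exactly onto $\Boundtwo$ and onto nothing larger. This is precisely the content already packaged into~\eqref{eq:LP} and~\eqref{eq:cobound2}, so the apparent obstacle dissolves once those identities are invoked, and the compatibility amounts to the elementary observation that the same explicit pairing $\partial_{a,\ell}\leftrightarrow\Xi_{a\ell}$ underlies both statements. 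Finally I would record a dimension sanity check for consistency: by Lemma~\ref{lem:dimH} one has $\dim\COHtwo=(g-1)(2\cdot 2-1)=3g-3$, matching $\dim T(\Mg)=3g-3$ and, via the Riemann--Roch table, $\dim\Hgtwo=3g-3$, so all three spaces share the common dimension and the constructed maps are genuine isomorphisms.
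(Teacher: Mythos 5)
Your proposal is correct and follows essentially the same route as the paper: the paper obtains the result by passing to quotients in Proposition~\ref{prop:TCHm1} ($\mathcal{F}_{2}\simeq\Eitwo\simeq T(\Cg)$) using the identification \eqref{eq:cobound2} of $\HOtwo\simeq\Boundtwo$ with the $\slLie_{2}(\C)$ subalgebra of $T(\Cg)$ and the fact that $T(\Schg)=T(\Mg)$. Your write-up merely makes explicit the compatibility checks (and the dimension count) that the paper leaves implicit in its one-sentence derivation.
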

\medskip

We now discuss  the geometrical meaning of a given $N=2$ GEM form $\Psi_{2}(x,y)$. 
 Following \eqref{eq:GEMgamy} we define $\Theta_{a}(x;\ell)\in\Hgtwo$ for $a\in\Ip$ and $\ell=0,1,2$ by 
\begin{align*}
\Psi_{2}(x,y)-\Psi_{2}(x,\gamma_{a} y)=\sum_{\ell=0}^{2}\Theta_{a}(x,\ell)  (y-w_{a})^{\ell}dy^{-1},
\end{align*}
for each $\Gamma$ generator $\gamma_{a}$. It follows that for all $\gamma\in \Gamma$ 
\begin{align}
\Psi_{2}(x,\gamma y)-\Psi_{2}(x,y)=-\sum_{\ell=0}^{2}\sum_{a\in\Ip} \Theta_{a}(x,\ell)  \Xi_{a\ell}[\gamma](y),
\label{eq:ThetaPsi}
\end{align}
for all $\gamma\in \Gamma$. Hence $\{\Theta_{a}(x;\ell)\}$ is a $\Hgtwo$ spanning set by \eqref{eq:chigam}. 
We also define a canonical  differential operator  given by \cite{GT1,TW}
\begin{align}
\label{eq:nablaCg}
\delCg(x):=\sum_{\ell=0}^{2}\sum_{a\in\Ip} \Theta_{a}(x,\ell) \partial_{a,\ell}.
\end{align}
$\delCg(x)$ is a holomorphic vector field on $\Cg$ with quadratic differential coefficients. 

We next examine the dependence of $\delCg(x)$ on the choice of GEM function $\Psi_{2}$. From \eqref{eq:GEMdef} it is sufficient to consider a new GEM function $\widecheck{\Psi}_{2}(x,y)=\Psi_{2}(x,y)-\Phi(x)P(y)$ for some $\Phi\in\Hgtwo$ and $P\in \HOtwo$. \eqref{eq:ThetaPsi} implies  
$\widecheck{\Theta}_{a}(x,\ell)=\Theta_{a}(x,\ell)+p_{a\ell}\Phi(x)$  
for $\Xi_{P}=\sum_{\ell=0}^{2}\sum_{a\in\Ip}p_{a\ell}\Xi_{a\ell}$ so that 
\begin{align*}
\delCgcheck(x)=\delCg(x)+\Phi(x)\mathcal{L}_{P}^{\Cg}.
\end{align*}
Therefore modulo $\slLie_{2}(\C)$ we find that $\delCg(x)$ determines a unique vector field $\delMg(x)$  independent of the choice of GEM function. Thus we may choose a basis of $3g-3$ tangent vectors $\{\partial_{a\ell}\}_{\J}\in T(\Mg)$, where $\J$ is a set of distinct $(a,\ell)$ values  as described in \S~\ref{sect:normalized basis}, with corresponding cohomology basis $\{\Xi_{a\ell}\}_{\J}$, $\HOtwo$ basis $\{\Phi _{a\ell}\}_{\J}$ and Petersson dual basis $\{\Phi^{\vee}_{a\ell}\}_{\J}$. Then \eqref{eq:CanPsiQPeriod} implies that
\begin{align}
\label{eq:delSchottky}
\delMg(x)=\sum_{(a,\ell)\in\J}\Phi^{\vee}_{a\ell}(x)\partial_{a,\ell}.
\end{align}
Furthermore, for any coordinates $\{m_{r}\} _{r=1}^{3g-3}$ on  moduli space $\Mg$  with $T(\Mg)$ basis $\{\partial_{m_{r}}\}_{r=1}^{3g-3}$ and corresponding  $\Hgtwo$-basis  $\{\Phi_{r}\} _{r=1}^{3g-3}$ with dual basis  $\{\Phi^{\vee}_{r}\} _{r=1}^{3g-3}$ then  
 \cite{O}
\begin{align}
\label{eq:nablaMg}
\delMg(x)=\sum_{r=1}^{3g-3}\Phi^{\vee}_{r}(x)\partial_{m_{r}}.
\end{align}

\medskip

Next let $\Sgn$ denote a Riemann surface with $n$  punctures  $y_{1},\ldots,y_{n}\in\Sg$ with parameter space $\Cgn:=\Cg\times(\Sg)^{n}$.
Define the differential operator \cite{GT1,TW,O} 
\begin{align}
\label{eq:nablaCgn}
\delCgn(x):=\delCg(x)+\sum_{k=1}^{n}\Psi_{2}(x,y_{k})dy_{k}\,\partial_{y_{k}}.
\end{align}
We note that \eqref{eq:delgamz} and \eqref{eq:ThetaPsi} imply that for all $\gamma\in\Gamma$ 
\begin{align*}
\delCg(x)(\gamma y)
=\left(\Psi_{2}(x,\gamma y)-\Psi_{2}(x,y)\right)d(\gamma y),
\end{align*}
which implies that for each puncture $y_{k}$
\begin{align}\label{eq:delCGngamy}
\delCgn(x)(\gamma y_{k})=\Psi_{2}(x,\gamma y_{k})d(\gamma y_{k}).
\end{align}
\begin{proposition}
	$\delCgn(x)$ is a holomorphic vector field on $\Cgn$ for $x\in \Sgn$. 
\end{proposition}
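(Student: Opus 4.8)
The plan is to establish two properties: holomorphicity of the coefficients of $\delCgn(x)$ in all of its arguments (for $x$ away from the punctures), and descent of the operator to the quotient $\Cgn=\Cg\times(\Sg)^{n}$, where each puncture $y_{k}$ is a point of $\Sg\cong\Omo/\Gamma$ and is therefore only determined up to the $\Gamma$-action.

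First I would dispose of holomorphicity. The summand $\delCg(x)$ is already known to be a holomorphic vector field on $\Cg$ with coefficients $\Theta_{a}(x,\ell)\in\Hgtwo$, as recorded below \eqref{eq:nablaCg}. The new terms $\Psi_{2}(x,y_{k})\,dy_{k}\,\partial_{y_{k}}$ have coefficient $\Psi_{2}(x,y_{k})$, which as a GEM form \eqref{eq:GEMdef} is meromorphic in $y_{k}$ with a unique simple pole at $y_{k}=x$ and depends holomorphically on the Schottky parameters through the Poincar\'e series \eqref{eq:GEMBers}. Since $x\in\Sgn$ means $x$ is distinct from every puncture, each such coefficient is holomorphic on $\Cgn$, and in $x$ it is a holomorphic weight-$2$ form on the punctured surface.

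The essential step is the descent to $\Cgn$. Because a puncture is a point of $\Sg$, replacing a representative $y_{k}$ by $\gamma y_{k}$ for $\gamma\in\Gamma$ must leave $\delCgn(x)$ unchanged, and this is exactly the content of \eqref{eq:delCGngamy}. I would make this explicit by changing the fibre coordinate $y_{k}\mapsto y_{k}'=\gamma y_{k}$, keeping in mind that $\gamma=\gamma(m)$ depends on the Schottky moduli $m$, so the change of variables mixes base and fibre directions. Under the chain rule, the part $\delCg$, defined through the $\partial_{a,\ell}$ at fixed $y_{k}$, acquires an extra $\partial_{y_{k}'}$-term with coefficient $\delCg(x)(\gamma y_{k})=\bigl(\Psi_{2}(x,\gamma y_{k})-\Psi_{2}(x,y_{k})\bigr)d(\gamma y_{k})$, the displayed identity preceding the proposition; simultaneously the puncture term transforms as $\Psi_{2}(x,y_{k})\,dy_{k}\,\partial_{y_{k}}=\Psi_{2}(x,y_{k})\,d(\gamma y_{k})\,\partial_{y_{k}'}$, using $dy_{k}\,(\gamma y_{k})'=d(\gamma y_{k})$. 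Adding the two contributions, the terms $\mp\Psi_{2}(x,y_{k})\,d(\gamma y_{k})$ cancel and the coefficient of $\partial_{y_{k}'}$ collapses to $\Psi_{2}(x,\gamma y_{k})\,d(\gamma y_{k})=\Psi_{2}(x,y_{k}')\,dy_{k}'$. Hence $\delCgn(x)$ written in the coordinate $y_{k}'$ has exactly its original form, so it is independent of the $\Gamma$-representative and descends to a genuine holomorphic vector field on $\Cgn$.

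The main obstacle I anticipate is the bookkeeping in this coordinate change: since $\gamma$ depends on the moduli, one must verify that $\delCg$ picks up precisely $\delCg(x)(\gamma y_{k})$ and no more when re-expressed at fixed $y_{k}'$, and that the weight-$(-1)$ transformation of $dy_{k}$ supplies the Jacobian $(\gamma y_{k})'$ converting $dy_{k}$ into $d(\gamma y_{k})$. Once the quasiperiodicity \eqref{eq:GEMgamy} of $\Psi_{2}$ in $y$ is matched against the moduli variation $\delCg(x)(\gamma y_{k})$ through that identity, the cancellation is forced and the proposition follows.
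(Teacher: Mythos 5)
Your argument is correct and follows essentially the same route as the paper: you perform the change of variables $y_{k}\mapsto\gamma y_{k}$ with the chain rule mixing base and fibre directions, and your explicit cancellation of the $\mp\Psi_{2}(x,y_{k})\,d(\gamma y_{k})$ terms is exactly the content of the paper's identity \eqref{eq:delCGngamy}, which the paper simply cites as a packaged formula where you unpack it. The holomorphicity observation also matches the paper's (brief) closing remark.
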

\begin{proof}
We first show that $\delCgn(x)$ is a vector field on $\Cgn$ i.e. $\delCgn(x)$ is invariant under $y_{k}\rightarrow \gamma y_{k}$  for  $\gamma\in\Gamma$. Let 
$
(\widetilde{w}_{\pm a},\widetilde{\rho}_{a},\widetilde{y}_{k})=(w_{\pm a},\rho_{a},\gamma y_{k})$. Then 
\[
\partial_{a,\ell}=\widetilde{\partial}_{a,\ell}+\sum_{k=1}^{n}\partial_{a,\ell}(\widetilde{y}_{k})\partial_{\widetilde{y}_{k}},\quad \partial_{y_{k}}=\partial_{y_{k}}(\widetilde{y}_{k})\partial_{\widetilde{y}_{k}}.
\]
From \eqref{eq:nablaCgn} and  \eqref{eq:delCGngamy} we obtain 
\begin{align*}
\delCgn(x)=\delCgtilde(x)+\sum_{k=1}^{n}\delCgn(x)(\widetilde{y}_{k})\partial_{\widetilde{y}_{k}}
=\delCgtilde(x)+\sum_{k=1}^{n}\Psi(x,\widetilde{y}_{k})d\widetilde{y}_{k}
=\delCgntilde(x).
\end{align*} 
Thus $\delCgn(x)$ is a  vector field on $\Cgn$.  Furthermore $\delCgn(x)$ is holomorphic  since $\Psi_{2}(x,y)$ and $\Theta_{a}(x,\ell) $ are holomorphic for $x\in\Sgn$.
\end{proof}
For a new GEM function $\widecheck{\Psi}_{2}(x,y)=\Psi_{2}(x,y)-\Phi(x)P(y)$ we  find that
\begin{align*}
\delCgncheck(x)=\delCgn(x)+\Phi(x)\mathcal{L}_{P}^{\Cgn},
\end{align*}
where by \eqref{eq:LP}
\[
\mathcal{L}_{P}^{\Cgn}:=\mathcal{L}_{P}^{\Cg}-\sum_{k=1}^{n}p(y_{k})\partial_{y_{k}}
=-\sum_{a\in\I}p(W_{a})\partial_{W_{a}}-\sum_{k=1}^{n}p(y_{k})\partial_{y_{k}}, 
\]
 i.e. $\mathcal{L}_{P}^{\Cgn}$ generates an $\slLie_{2}(\C)$ subalgebra of $T(\Cgn)$. Thus modulo $\slLie_{2}(\C)$ we find that $\mathcal{L}_{P}^{\Cgn}$ determines a unique 
vector field $\delMgn(x)$ independent of the choice of GEM function. Similarly to the unpunctured case, we may choose a canonical cohomology basis $\{\Phi _{a\ell}\}_{\J}$  with corresponding canonical GEM form $\GEM_{2}^{{\Can}}$ of Proposition~\ref{prop:Can_norm} so that 
\begin{proposition}
There exists a unique holomorphic vector field	on $\Schgn$ 
\[
\delMgn(x)=\delMg(x)+\sum_{k=1}^{n}\GEM_{2}^{{\Can}}(x,y_{k})dy_{k}\,\partial_{y_{k}}.\]
\end{proposition}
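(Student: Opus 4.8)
The plan is to obtain $\delMgn(x)$ as the descent of the holomorphic vector field $\delCgn(x)$ through the quotient $\Schgn=\Cgn/\SL_{2}(\C)$, mirroring how $\delMg(x)$ was obtained from $\delCg(x)$ in the unpunctured case. By the preceding proposition $\delCgn(x)$ is a holomorphic vector field on $\Cgn$ for each $x\in\Sgn$, so the only things I need to check are that it projects to a well-defined holomorphic vector field on the quotient and that, for the canonical choice $\GEM_{2}^{\Can}$, this projection takes the stated form.

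First I would establish the descent. The $\SL_{2}(\C)$ action on $\Cgn$ is the M\"obius action \eqref{eq:Mobwrhoa} on the Schottky parameters together with the induced M\"obius action on the punctures $y_{1},\dots,y_{n}\in\Sg$, and its infinitesimal generators are precisely the fields $\mathcal{L}_{P}^{\Cgn}=-\sum_{a\in\I}p(W_{a})\partial_{W_{a}}-\sum_{k=1}^{n}p(y_{k})\partial_{y_{k}}$ for $P=p(z)dz^{-1}\in\HOtwo$, which span the $\slLie_{2}(\C)$ subalgebra of $T(\Cgn)$. The change-of-GEM-form computation already recorded gives $\delCgncheck(x)=\delCgn(x)+\Phi(x)\mathcal{L}_{P}^{\Cgn}$, so the ambiguity in $\delCgn(x)$ coming from the choice of GEM form is tangent to the $\SL_{2}(\C)$-orbits. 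Hence $\delCgn(x)$ is $\SL_{2}(\C)$-equivariant modulo $\slLie_{2}(\C)$ and descends to a unique holomorphic vector field $\delMgn(x)$ on $\Schgn$, whose tangent space is $T(\Mgn)$; this gives existence and uniqueness together.

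Next I would identify the descended field for the canonical choice. Substituting $\GEM_{2}^{\Can}$ into \eqref{eq:nablaCgn} splits $\delCgn(x)$ into the unpunctured part $\delCg(x)$ and the puncture part $\sum_{k=1}^{n}\GEM_{2}^{\Can}(x,y_{k})dy_{k}\,\partial_{y_{k}}$. The unpunctured part descends to $\delMg(x)=\sum_{(a,\ell)\in\J}\Phi^{\vee}_{a\ell}(x)\partial_{a,\ell}$ as in \eqref{eq:delSchottky}, while the puncture part is already written in the $\SL_{2}(\C)$-equivariant combination $\GEM_{2}^{\Can}(x,y_{k})dy_{k}\,\partial_{y_{k}}$ and passes to the quotient unchanged. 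Adding the two contributions yields the asserted formula for $\delMgn(x)$.

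The hard part will be the descent step, namely verifying that the $\slLie_{2}(\C)$ directions generated by $\mathcal{L}_{P}^{\Cgn}$ are exactly the tangent directions to the $\SL_{2}(\C)$-orbits on the full space $\Cgn$, including the puncture coordinates, so that quotienting by the GEM-form ambiguity coincides with quotienting by $\SL_{2}(\C)$. This reduces to checking that the M\"obius action on $\Cgn$ decomposes as the action \eqref{eq:Mobwrhoa} on Schottky parameters together with the ordinary M\"obius action on each $y_{k}$, and that $\mathcal{L}_{P}^{\Cgn}$ is its infinitesimal generator; the Schottky-parameter part is \eqref{eq:LP} and the puncture part $-\sum_{k}p(y_{k})\partial_{y_{k}}$ is the infinitesimal M\"obius field $z\mapsto p(z)$ acting on each puncture, so the verification is routine once the action is written out explicitly.
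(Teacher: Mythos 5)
Your proposal follows essentially the same route as the paper: the paper likewise establishes that $\delCgn(x)$ is a holomorphic vector field on $\Cgn$, observes that changing the GEM form shifts it by $\Phi(x)\mathcal{L}_{P}^{\Cgn}$ lying in the $\slLie_{2}(\C)$ subalgebra of $T(\Cgn)$, and concludes that it descends to a unique vector field on $\Schgn=\Cgn/\SL_{2}(\C)$, with the explicit formula obtained by substituting the canonical GEM form $\GEM_{2}^{\Can}$ and invoking \eqref{eq:delSchottky}. The only difference is that you flag as the ``hard part'' the identification of the $\mathcal{L}_{P}^{\Cgn}$ with the infinitesimal $\SL_{2}(\C)$-orbit directions on $\Cgn$ including the puncture coordinates, which the paper asserts directly from \eqref{eq:LP} and the M\"obius action; your proposed verification is the routine check the paper leaves implicit.
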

\begin{remark}
The operators $\delCg$ and $\delCgn$ appear in the  conformal Ward identities for a vertex operator algebra on a Riemann surface as described in ref.~\cite{GT1}  (for genus 2) and in ref.~\cite{TW} (for genus $g$).
This leads to partial differential equations giving the variation with respect to moduli of Riemann surface structures  such as the bidifferential of the second kind $\omega(x,y)$, the projective connection, the holomorphic 1-differentials $\nu_{a}$ and the period matrix $\Omega_{ab}$. Thus Rauch's formula \cite{Ra} is rederived and expressed as
\begin{align}
\tpi\delMg\Omega_{ab}=\nu_{a}(x)\nu_{b}(x),\quad  a,b\in\Ip.\label{eq:Rauch}
\end{align}
In ref.~\cite{O} the operator $\delMg(x)$ is defined and  the operator $\delMgn(x)$ is anticipated but the existence and a construction of the $\GEM_{2}^{{\Can}}(x,y)$ term is not given. Similar partial differential equations for Riemann surface structures  (including the prime form) are also discussed  in ref.~\cite{O}. 
\end{remark}



\section{The Green's Function $G_{N}(x,y)$}\label{sec:Green's}
We consider the Green's function $G_{N}(x,y)$ for the anti-holomorphic part of the Poincar\'e metric compatible  connection \cite{Ma,McI,McIT} and its relationship to a GEM form $\GEM_{N}$.  We develop some novel properties for the Green's function and describe an explicit formula for the inverse Bers map $\alpha^{-1}$ employing the Green's function. 

There is a unique connection acting on $\mathfrak{H}^{(g)}_{N}$  compatible with the Poincar\'e metric and the complex structure defined by e.g. \cite{McIT}
\begin{align*}
\partial_{N} \oplus \overline{\partial}_{N}:  \mathfrak{H}^{(g)}_{N}\rightarrow  \mathfrak{H}^{(g)}_{N+1}\oplus \mathfrak{H}^{(g)}_{N,1},
\end{align*}
where for $\Theta (z)=\theta (z) dz^N\in \mathfrak{H}^{(g)}_{N}$ we have 
\begin{align*}
 \partial_{N} \theta (z)&=\rho(z)^N\dz \left(\rho(z)^{-N}\theta (z)\right),
\qquad 
 \overline{\partial}_{N} \theta (z)=\dzb \theta (z).
\end{align*}
Note that $\ker  \overline{\partial}_{N}=\HgN$.
Define the projection  ${\mathcal P}_{N}:{\mathfrak H}^{(g)}_{N}\to \HgN$ by
\begin{align*}
{\mathcal P}_{N}\Theta := \sum_{r=1}^{d_{N}}\langle \Theta , \Phi _{r}\rangle \Phi _{r}^{\vee},
\end{align*}
for $\Theta =\theta(z)dz^{N}\in{\mathfrak H}^{(g)}_{N}$ and any  $\HgN$-basis $\{\Phi _{r}=\phi _{r}(z)dz^{N}\}_{r=1}^{d_{N}}$ with Petersson dual basis $\{\Phi ^{\vee}_{r}=\phi ^{\vee}_{r}(z)dz^{N}\}_{r=1}^{d_{N}}$. It is also useful to define the projection kernel 
\begin{align}
\label{eq:PNker}
p_{N}(x,y):=
\sum_{r=1}^{d_{N}}\phi _{r}^{\vee}(x) \overline{\phi _{r}(y)} \rho^{1-N}(y),
\end{align}
so that ${\mathcal P}_{N}\theta (x)=\iint_{\D}p_{N}(x,y)\theta(y)  d^{2}y$.

For $N\ge 2$ and $g\ge 2$, we define the Green's function  for $\overline{\partial}_{N}$ to be a bidifferential $(N,1-N)$-form $G_{N}(x,y)=g_{N}(x,y)dx^{N}dy^{1-N}$ where the regular part defined by 
\begin{align}
\label{eq:GN1}
g_{N}^{R}(x,y):=g_{N}(x,y)-\frac{1}{x-y},
\end{align}
satisfies the following two conditions:

\begin{enumerate}[(I)]
	\item $g_{N}^{R}(x,y)$ is holomorphic in $x$, 
	\item $g_{N}^{R}(x,y)$ is not meromorphic in $y$ with $
		\dfrac{1}{\pi}\dyb\, g_{N}^{R}(x,y)
	=  p_{N}(x,y)$. 
\end{enumerate}

\begin{remark}\leavevmode
	\begin{enumerate}[(i)]
	\item
We may heuristically rewrite (II) as 
	\[
	\frac{1}{\pi}\dyb g_{N}(x,y)=-\delta(y-x)+p_{N}(x,y),
	\]
	for Dirac delta function $\pi\delta(y-x)=\dyb (y-x)^{-1}$ from \eqref{eq:fdelta}.
	This  is a defining property for the Green's function for $\overline{\partial}_{N}$ in the physics literature e.g. \cite{EO,Ma}.	
	
	\item 
	We may also write (II)  in a coordinate-free way (similarly to \eqref{eq:dPhiF}) where
	\begin{align}
	\label{eq:dKNPhi}
	\frac{1}{\tpi}d \left(G_{N}(x,\cdot)\Phi \right)
	=\Phi\,P_{N}(x,\cdot),
	\end{align}
	on $\D-\{x\}$ for all $\Phi \in\HgN$ with
	\[
	P_{N}(x,y)=
	\sum_{r=1}^{d_{N}}\Phi _{r}^{\vee}(x) \overline{\Phi _{r}(y)}  \PM(y)^{-N}\omega(y).
	\] 
	\end{enumerate}
\end{remark}
\begin{lemma}
	\label{lem:KNunique}
	The Green's function is unique.
\end{lemma}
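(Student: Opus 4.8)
The plan is to compare two candidate Green's functions and to show that their difference is a genuine global holomorphic $(1-N)$-differential in $y$, which then vanishes by Riemann--Roch.

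First I would suppose that $G_{N}(x,y)=g_{N}(x,y)\,dx^{N}dy^{1-N}$ and $G_{N}'(x,y)=g_{N}'(x,y)\,dx^{N}dy^{1-N}$ are both Green's functions in the sense of conditions~(I) and~(II), and set $D(x,y):=G_{N}(x,y)-G_{N}'(x,y)=d(x,y)\,dx^{N}dy^{1-N}$ with $d=g_{N}-g_{N}'$. Since both $G_{N}$ and $G_{N}'$ carry the same singular part $\tfrac{1}{x-y}$, this cancels in the difference, so in fact $d=g_{N}^{R}-g_{N}'^{R}$. By~(I) each regular part is holomorphic in $x$, and each is smooth (not meromorphic, but pole-free) in $y$; hence $d$ is smooth on $\Omo\times\Omo$ with no poles, in particular none at $x=y$.

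Next I would extract holomorphicity in $y$ from condition~(II):
\begin{align*}
\frac{1}{\pi}\dyb d(x,y)=p_{N}(x,y)-p_{N}(x,y)=0,
\end{align*}
so $d(x,y)$ is holomorphic in $y$ as well. Because $G_{N}$ and $G_{N}'$ are bidifferential $(N,1-N)$-\emph{forms} rather than quasiforms, $D$ transforms as a genuine (that is, $\Gamma$-invariant, not merely quasiperiodic) weight $1-N$ differential in $y$. Consequently, for each fixed $x\in\Omo$, the section $D(x,\cdot)$ descends to a global holomorphic $(1-N)$-differential on $\Sg$, i.e.\ an element of $\mathcal{H}^{(g)}_{1-N}$.

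Finally I would invoke the dimension table: for $g\ge 2$ and weight $1-N\le -1<0$ the Riemann--Roch theorem gives $\dim\mathcal{H}^{(g)}_{1-N}=0$. Therefore $D(x,\cdot)=0$ for every $x$, and hence $G_{N}=G_{N}'$, proving uniqueness. The step most worth stating carefully --- and the only place where anything beyond bookkeeping enters --- is the claim that $D$ is a bona fide global holomorphic $(1-N)$-differential in $y$: this rests precisely on the periodic (not quasiperiodic) transformation law in $y$, which is exactly what distinguishes the Green's function from a GEM form, together with the absence of poles once the common singular part has been subtracted. Granting these two points, the vanishing is immediate from the negativity of the weight.
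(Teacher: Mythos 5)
Your proposal is correct and follows essentially the same route as the paper: take the difference of two candidate Green's functions, observe via condition (II) that it is holomorphic in $y$, note that it is a genuine (periodic) weight $1-N$ form in $y$, and conclude it vanishes by Riemann--Roch since $1-N<0$ for $g\ge 2$. Your write-up simply makes explicit two points the paper leaves implicit, namely the cancellation of the common singular part and the descent to a global section on $\Sg$.
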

\begin{proof}
	Suppose $G_{N}$ and $\widetilde{G}_{N}$ are Green's functions  for $\overline{\partial}_{N}$. Let  
	\[
	G_{N}(x,y)-\widetilde{G}_{N}(x,y)= h_{N}(x,y)dx^{N}dy^{1-N},
	\]
	for $h_{N}(x,y)=g^{R}_{N}(x,y)-\widetilde{g}^{R}_{N}(x,y)$.
(II) implies $ h_{N}(x,y)dy^{1-N}$ is a holomorphic form of weight $1-N<0$ in $y$ (for fixed $x$). Hence $ h_{N}=0$  by the Riemann-Roch theorem.
\end{proof} 
\begin{proposition}
	\label{prop:KNperp}
	$\langle G_{N}(\cdot,y),\Phi\rangle=0$ for all $\Phi\in\HgN$.
\end{proposition}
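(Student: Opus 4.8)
The plan is to show that $G_N(\cdot,y)$ is orthogonal to every holomorphic $N$-differential by integrating against a basis element and exploiting the defining properties (I) and (II) of the Green's function together with Stokes' theorem, much as in the proofs of Proposition~\ref{prop:BersMapalpha} and Proposition~\ref{prop:ThetaPhi}. Fix $\Phi\in\HgN$. The key is that the inner product $\langle G_N(\cdot,y),\Phi\rangle$ is, by definition \eqref{eq:Pprod}, an integral over the fundamental domain $\D$ of $G_N(\cdot,y)\overline{\Phi}\,\PM^{-N}\omega$. I would like to convert this area integral into a boundary integral, but there are two subtleties: the simple pole of $G_N(x,y)$ at $x=y$, and the fact that $G_N$ is not holomorphic in $y$, so the integrand does not integrate to an exact form in the naive way.

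First I would set up the calculation carefully. Using the coordinate-free reformulation \eqref{eq:dKNPhi} of condition (II), we have
\begin{align*}
\frac{1}{\tpi}d\left(G_N(\cdot,y)\Phi\right)=\Phi\,P_N(\cdot,y)
\end{align*}
on $\D-\{y\}$, where the roles of the two variables are swapped relative to the displayed form. The point is that $\langle G_N(\cdot,y),\Phi\rangle$ is an integral in the first variable, and I want an identity for $d\big(G_N(x,\cdot)\overline{\Phi(x)}\,\cdots\big)$ viewed as a form in $x$. Since $\Phi$ is holomorphic in $x$ and $G_N(x,y)$ is meromorphic in $x$ with only a simple pole at $x=y$ (this is property (I) for the regular part, plus the explicit $\frac{1}{x-y}$ singularity from \eqref{eq:GN1}), the product $G_N(x,y)\overline{\Phi(x)}\,\PM^{-N}\omega$ is, in the variable $x$, the wedge of a meromorphic form and an anti-holomorphic form, and I expect $d$ of the associated one-form to vanish away from $x=y$.

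The main steps are then: (1) excise a small disc $\Delta_\varepsilon$ of radius $\varepsilon$ around $x=y$ to form $\D_\varepsilon=\D-\Delta_\varepsilon$, so that the integrand is exact on $\D_\varepsilon$; (2) apply Stokes' theorem to rewrite the area integral over $\D_\varepsilon$ as a sum of contour integrals over the Schottky boundary curves $\Con{a}$ plus a contour integral over $\partial\Delta_\varepsilon=C_\varepsilon$; (3) show the boundary contributions over the $\Con{a}$ vanish by the quasiperiodicity/holomorphy properties — here I would use that pairing $\Con{a}$ with $\Con{-a}=-\gamma_a\Con{a}$ produces a holomorphic Eichler-type difference that integrates to zero against the holomorphic $\Phi$, exactly as in \eqref{eq:zeroStokes}; and (4) compute the residue contribution from $C_\varepsilon$ coming from the simple pole $\frac{1}{x-y}$, taking the limit $\varepsilon\to 0$. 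Combining these, $\langle G_N(\cdot,y),\Phi\rangle$ equals a residue term plus boundary terms, and I expect the whole thing to collapse to $0$ once the normalization in property (II) (involving precisely the projection kernel $p_N$) is accounted for.

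The hard part will be correctly tracking the interplay between the pole residue and the $p_N(x,y)$ term in condition (II). Because $G_N$ fails to be holomorphic in $y$ exactly through $p_N$, one must check that the extra projection-kernel contribution that would appear when differentiating does not survive in the first-variable integral, or that it cancels against the residue at $x=y$. I anticipate that the cleanest route is to observe that $\langle G_N(\cdot,y),\Phi\rangle$, viewed as a function (or $1-N$ form) of $y$, is holomorphic in $y$ of negative weight $1-N<0$ — its $\dyb$-derivative vanishes because the delta-function singularity from the pole cancels the $p_N$ term, using the heuristic identity $\frac{1}{\pi}\dyb g_N(x,y)=-\delta(y-x)+p_N(x,y)$ paired against $\Phi$ and the projection property ${\mathcal P}_N\Phi=\Phi$ — and then invoke the Riemann-Roch theorem (as in Lemma~\ref{lem:KNunique}) to conclude it must vanish identically. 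This second strategy may in fact be shorter and more robust than the Stokes' theorem computation, so I would present it as the main line and keep the contour argument as the mechanism establishing holomorphy in $y$.
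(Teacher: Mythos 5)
Your ``second strategy'' --- computing $\frac{1}{\pi}\dyb\langle G_N(\cdot,y),\Phi\rangle$, observing that the pole's delta-function contribution (via \eqref{eq:fdelta}) cancels the $p_N$ term paired with $\Phi$ through the projection property, and then killing the resulting negative-weight holomorphic form by Riemann--Roch --- is exactly the paper's proof, so the proposal is correct and takes essentially the same route. The preliminary Stokes'-theorem discussion is unnecessary (and the holomorphy in $y$ comes from the direct $\dyb$ computation, not from a contour argument), but since you yourself demote it in favour of the second strategy, nothing is missing.
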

\begin{proof}
For a given $\HgN$-basis $\{\Phi_{r}\} _{r=1}^{d_{N}}$, let $\lambda_{s}(y)dy^{1-N} =\langle G_{N}(\cdot,y),\Phi_{s}\rangle$.
Equation \eqref{eq:fdelta} implies that for $x,y\in \D$
\begin{align*}
\frac{1}{\pi}\dyb\lambda_{s}(y)&=\frac{1}{\pi}\dyb\iint_{\D}g_{N}(x,y)\,
\overline{\phi_{s}(x)}\rho(x)^{1-N}\,d^{2}x
\\
&=-\overline{\phi_{s}(y)}\rho(y)^{1-N}+\frac{1}{\pi}\iint_{\D}\dyb g_{N}^{R}(x,y)\,
\overline{\phi_{s}(x)}\rho(x)^{1-N}\,d^{2}x.
\end{align*}
Then  \eqref{eq:PNker} and condition (II)  imply
\begin{align*}
\frac{1}{\pi}\dyb\lambda_{s}(y)= -\overline{\phi_{s}(y)}\rho(y)^{1-N}+\sum_{r=1}^{d_{N}}\langle \Phi _{r}^{\vee},\Phi_{s}\rangle \overline{\phi _{r}(y)} \rho(y)^{1-N}=0.
\end{align*}
Thus $ \lambda_{s}(y)dy^{1-N}$ is a negative weight  holomorphic  form and hence
$ \lambda_{s}=0$.
\end{proof}

We now construct  the unique Green's function \cite{Ma,McIT}.
 Let $\{\Phi _{s}\} _{s=1}^{d_{N}}$  be a $\HgN$-basis with potentials $\{{F} _{s}(y)\}_{s=1}^{d_{N}}$, Petersson dual basis  $\{\Phi ^{\vee}_{r}\} _{r=1}^{d_{N}}$ and GEM form $\GEM_{N}$.
 \begin{proposition} 
 	The Green's function $G_{N}(x,y)$ for $x,y\in\D$ is given by
 	\begin{align}
 	\label{eq:KNdefn}
 	G_{N}(x,y)=\GEM_{N}(x,y)+\sum_{r=1}^{d_{N}}\Phi ^{\vee}_{r}(x){F}_{r}(y).
 	\end{align}
 \end{proposition}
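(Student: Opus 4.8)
The plan is to verify that the right-hand side of \eqref{eq:KNdefn}, call it $\widehat{G}_{N}(x,y)$, satisfies the two defining conditions (I) and (II) of the Green's function; uniqueness then follows from Lemma~\ref{lem:KNunique}. First I would check the pole structure. By \eqref{eq:GEMdef}, $\GEM_{N}(x,y)$ is meromorphic in $x$ with a unique simple pole of residue one at $x=y$, while the Petersson dual differentials $\Phi^{\vee}_{r}(x)$ are holomorphic; hence the correction term $\sum_{r}\Phi^{\vee}_{r}(x)F_{r}(y)$ is holomorphic in $x$, so $\widehat{G}_{N}(x,y)-\frac{1}{x-y}$ is holomorphic in $x$, giving condition (I).

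The crux is condition (II), the inhomogeneous $\dyb$-equation. The key observation is that $\GEM_{N}(x,y)$ is meromorphic (hence $\dyb$-closed away from its pole) in $y$, so the non-meromorphic $y$-dependence of $\widehat{G}_{N}$ comes \emph{entirely} from the potentials $F_{r}(y)$. Applying $\frac{1}{\pi}\dyb$ and using the Bers potential equation \eqref{eq:dF} with $\Phi=\Phi_{r}$, namely $\frac{1}{\pi}\dyb f_{r}=\overline{\phi_{r}(y)}\rho(y)^{1-N}$, the computation of $\frac{1}{\pi}\dyb\widehat{g}^{R}_{N}(x,y)$ collapses to $\sum_{r}\phi^{\vee}_{r}(x)\overline{\phi_{r}(y)}\rho(y)^{1-N}$, which is exactly the projection kernel $p_{N}(x,y)$ of \eqref{eq:PNker}. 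This matches (II) on the nose, provided the regular part is correctly identified.

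I therefore expect the main obstacle to be bookkeeping rather than conceptual: one must be careful that $F_{r}(y)=-\langle\GEM_{N}(\cdot,y),\Phi_{r}\rangle$ from \eqref{eq:Bers_pot} is genuinely the Bers potential for $\Phi_{r}$ (so that \eqref{eq:dF} applies), and that the simple pole of $\GEM_{N}$ at $x=y$ does not spoil the $\dyb$-calculation in $y$ for fixed $x$. Since $\GEM_{N}(x,y)$ is honestly meromorphic in $y$ with its only singularity at $y=x$, the quantity $\dyb\GEM_{N}(x,y)$ vanishes on $\D\setminus\{x\}$, so no delta-function contribution in $y$ arises from the GEM term; the full $\dyb$-derivative is carried by the $F_{r}$'s. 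The main subtlety is then just confirming the simple-pole residue and the holomorphicity of $\Phi^{\vee}_{r}$, both of which are already established, so the verification is direct.

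A clean way to organise the write-up is to define $\widehat{G}_{N}(x,y)$ as the right-hand side of \eqref{eq:KNdefn}, establish (I) from the pole and holomorphicity remarks above, then establish (II) via the single displayed computation
\begin{align*}
\frac{1}{\pi}\dyb\left(\widehat{g}_{N}(x,y)-\frac{1}{x-y}\right)
=\frac{1}{\pi}\sum_{r=1}^{d_{N}}\phi^{\vee}_{r}(x)\,\dyb f_{r}(y)
=\sum_{r=1}^{d_{N}}\phi^{\vee}_{r}(x)\,\overline{\phi_{r}(y)}\,\rho(y)^{1-N}
=p_{N}(x,y),
\end{align*}
and conclude by Lemma~\ref{lem:KNunique} that $\widehat{G}_{N}=G_{N}$.
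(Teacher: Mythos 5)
Your strategy---verify the defining properties of the right-hand side and invoke uniqueness---is the paper's strategy, and your verifications of (I) and (II) are correct and essentially identical to the paper's (condition (II) does collapse to the Bers equation \eqref{eq:dF} exactly as you compute, since $\gem_{N}(x,y)-\frac{1}{x-y}$ is holomorphic in $y$ on $\D$). However, there is a genuine gap: you never check that the right-hand side of \eqref{eq:KNdefn} is actually a bidifferential $(N,1-N)$-\emph{form}, i.e.\ that it is honestly $\Gamma$-periodic in $y$ rather than merely quasiperiodic. This is part of the definition of the Green's function, prior to conditions (I) and (II), and it is the step the paper's proof spends its effort on: by \eqref{eq:GEMgamy} and \eqref{eq:chigam} the quasiperiod of $\GEM_{N}$ in $y$ is $\chi[\gamma](x,y)=-\sum_{r}\Phi^{\vee}_{r}(x)\Xi_{r}[\gamma](y)$, while by \eqref{eq:xiF} the correction term picks up $+\sum_{r}\Phi^{\vee}_{r}(x)\Xi_{r}[\gamma](y)$, and these cancel precisely because the $F_{r}$ in \eqref{eq:KNdefn} are the potentials attached to \emph{this} $\GEM_{N}$ via \eqref{eq:Bers_pot}.

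This omission is not mere bookkeeping, because conditions (I) and (II) alone do not characterize the Green's function. Replacing $F_{r}$ by $F_{r}+P_{r}$ with $P_{r}\in\HON$ leaves both (I) and (II) intact (the $P_{r}$ are holomorphic in $y$), yet changes the right-hand side by the nonzero term $\sum_{r}\Phi^{\vee}_{r}(x)P_{r}(y)$; so your argument, as written, would ``prove'' that a whole family of distinct objects all equal the unique $G_{N}$. What breaks for the mismatched choices is exactly the $\Gamma$-periodicity in $y$, and without that periodicity the Riemann--Roch argument underlying Lemma~\ref{lem:KNunique} (a holomorphic form of negative weight on $\mathcal{S}_{g}$ vanishes) does not apply, since on the fundamental domain alone there are plenty of nonzero holomorphic candidates. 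Adding the one-line cancellation above closes the gap and recovers the paper's proof.
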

\begin{proof}
	$ \GEM_{N}(x,y)+\sum_{r}\Phi ^{\vee}_{r}(x){F}_{r}(y)$ is an $N$-differential with respect to $x$ and using \eqref{eq:xiF} and \eqref{eq:chigam}, it is a $1-N$ differential with respect to $y$. Thus condition I of  \eqref{eq:GN1} is verified.
It is straightforward to confirm condition II  using the Bers equation \eqref{eq:dF} and  that 
\[
\gem_{N}^{R}(x,y)=\gem_{N}(x,y)-\frac{1}{x-y},
\]
 is holomorphic in $y$. 
\end{proof}
\begin{remark}
	\label{rem:GEM}
We note that Proposition~\ref{prop:KNperp} is easily verified using \eqref{eq:Bers_pot}. Furthermore, we can interpret  the Green's function as being the orthogonal projection of the GEM form $\GEM_{N}(x,y)$. Conversely, $\GEM_{N}(x,y)$ can be interpreted as a meromorphic extension of the unique  Green's function $G_{N}(x,y)$ which is why we refer to  $\GEM_{N}(x,y)$ as a Green's function with Extended Meromorphicity or GEM form. 
\end{remark}

The inverse map $\alpha^{-1}$  (associated with the inverse Bers map $\beta^{-1}$) which exists by Proposition~\eqref{prop:BersMapalpha} can  be explicitly described in terms of the  Green's function as follows:
\begin{proposition}\label{prop:GNF}
Let $\Xi$ be a 1-cocycle. Then $\Phi =\beta^{-1}(\Xi)$ has Bers potential \\ $F_{\Xi}(y)=\alpha^{-1}(\Xi)$ given by
\begin{align}
\label{eq:GNF}
F_{\Xi}(y)
=
\frac{1}{\tpi}\sum_{a=1}^{g}\oint_{\Con{a}}G_{N}(\cdot,y)\Xi[\gamma_{a}]
-\Xi[\lambda](y),	
\end{align}
where $\lambda\in\Gamma$ is the unique Schottky group element such that $\lambda y\in\D$. 
\end{proposition}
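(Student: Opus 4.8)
The plan is to exploit the linearity of both sides of \eqref{eq:GNF} in the cocycle $\Xi$ together with the bijectivity of $\alpha$ established in Proposition~\ref{prop:BersMapalpha}. The right-hand side of \eqref{eq:GNF} is manifestly linear in $\Xi$, and $\alpha^{-1}$ is linear, so it suffices to verify the identity on a spanning set of $\Ei$. Recalling from Lemmas~\ref{lem:dimcobound} and~\ref{lem:dimH} that $\Ei=\mathrm{span}\{\Xi_{s}\}_{s=1}^{d_{N}}\oplus\Bound$, where $\{\Xi_{s}\}$ are the particular cocycle representatives used to build $\GEM_{N}$ and $\Bound$ is the coboundary space, I would check the formula separately on the $\Xi_{s}$ and on a coboundary $\Xi_{P}$ with $P\in\HON$. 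In both cases the first step is to insert the explicit Green's function \eqref{eq:KNdefn}, $G_{N}(x,y)=\GEM_{N}(x,y)+\sum_{r}\Phi^{\vee}_{r}(x)F_{r}(y)$, into the contour integral, splitting it into a $\GEM_{N}$ piece and a $\sum_{r}\Phi^{\vee}_{r}F_{r}$ piece.

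For $\Xi=\Xi_{s}$ the two pieces are already available: Proposition~\ref{prop:GEMCont} gives $\frac{1}{\tpi}\sum_{a}\oint_{\Con{a}}\GEM_{N}(\cdot,y)\Xi_{s}[\gamma_{a}]=\Xi_{s}[\lambda](y)$, while Corollary~\ref{cor:PhiCont} gives $\frac{1}{\tpi}\sum_{a}\oint_{\Con{a}}\Phi^{\vee}_{r}\Xi_{s}[\gamma_{a}]=\delta_{rs}$, so the second piece collapses to $F_{s}(y)$. Subtracting $\Xi_{s}[\lambda](y)$ as in \eqref{eq:GNF} then leaves exactly $F_{s}(y)=\alpha^{-1}(\Xi_{s})(y)$, as required.

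The substantive part is the coboundary case $\Xi=\Xi_{P}$. Here the $\sum_{r}\Phi^{\vee}_{r}F_{r}$ piece drops out immediately by Remark~\ref{rem:Xirep}, since each $\Phi^{\vee}_{r}\in\HgN$ is holomorphic. It remains to evaluate $\frac{1}{\tpi}\sum_{a}\oint_{\Con{a}}\GEM_{N}(\cdot,y)\Xi_{P}[\gamma_{a}]$. I would obtain this by repeating the Stokes argument in the proof of Proposition~\ref{prop:GEMCont}, but now with the zero-form potential $P$ in place of $F_{s}$: by Lemma~\ref{lem:ZeroPot}, $P\in\HON$ is a Bers potential for $\Phi=0$, so the bulk term $\iint_{\D_{\varepsilon}}\GEM_{N}(\cdot,y)\overline{\Phi}\,\PM^{-N}\omega$ now vanishes identically and only the small-circle residue at $x=\lambda y$ survives, giving $\frac{1}{\tpi}\sum_{a}\oint_{\Con{a}}\GEM_{N}(\cdot,y)\Xi_{P}[\gamma_{a}]=P(\lambda y)$. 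Equivalently, $\GEM_{N}(\cdot,y)P$ is a meromorphic $1$-form in $x$ on $\D$ with a single simple pole of residue $P(\lambda y)$ at $x=\lambda y$, and telescoping $\partial\D=\sum_{a\in\I}\Con{a}$ via $\Con{-a}=-\gamma_{a}\Con{a}$ converts the residue theorem into the stated sum. The right-hand side of \eqref{eq:GNF} then becomes $P(\lambda y)-\Xi_{P}[\lambda](y)$, and since $\Xi_{P}[\lambda](y)=P(\lambda y)-P(y)$ by \eqref{eq:xiF}, this equals $P(y)=\alpha^{-1}(\Xi_{P})(y)$.

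The main obstacle I anticipate is precisely the orientation and residue bookkeeping in this last computation: one must ensure that the residue of $\GEM_{N}(\cdot,y)P$ at $x=\lambda y$ enters with the correct sign relative to the boundary orientation of $\D$, and that the telescoping over $\Con{\pm a}$ reproduces $+\sum_{a=1}^{g}\oint_{\Con{a}}\GEM_{N}\Xi_{P}[\gamma_{a}]$ rather than its negative. Inheriting these conventions verbatim from the proof of Proposition~\ref{prop:GEMCont} (with $F_{s}\mapsto P$ and $\Phi_{s}\mapsto 0$) is the safest way to fix them. With the two cases established, linearity in $\Xi$ together with the decomposition $\Ei=\mathrm{span}\{\Xi_{s}\}\oplus\Bound$ completes the proof.
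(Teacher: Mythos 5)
Your proof is correct and follows the same core route as the paper: substitute the decomposition \eqref{eq:KNdefn} of $G_{N}$ into the contour integral and evaluate the two pieces using Proposition~\ref{prop:GEMCont} and Corollary~\ref{cor:PhiCont}. The one place you go beyond the paper is the coboundary case. The paper's proof simply writes $\Xi=\sum_{r}x_{r}\Xi_{r}$, which is only valid modulo $\Bound$ (since $\dim\Ei=g(2N-1)$ exceeds $d_{N}=(g-1)(2N-1)$), whereas you explicitly verify the formula on $\Xi_{P}$ by rerunning the Stokes/residue argument of Proposition~\ref{prop:GEMCont} with $F_{s}$ replaced by $P$, a potential for $\Phi=0$ by Lemma~\ref{lem:ZeroPot}, so that the bulk integral vanishes and only the residue $P(\lambda y)$ at $x=\lambda y$ survives; combined with $\Xi_{P}[\lambda](y)=P(\lambda y)-P(y)$ this yields $\alpha^{-1}(\Xi_{P})=P$ as required. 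That extra case is not redundant: it is exactly what is needed for the linearity argument to close over all of $\Ei=\mathrm{span}\{\Xi_{s}\}\oplus\Bound$, and your sign and orientation bookkeeping, inherited verbatim from the proof of Proposition~\ref{prop:GEMCont}, is consistent with the paper's conventions.
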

\begin{proof} 
 Let $\{\Phi _{s}\} _{s=1}^{d_{N}}$  be a $\HgN$-basis with potentials $\{{F} _{s}(y)\}$ and corresponding cocycle basis $\{\Xi_{s}\}$, Petersson dual basis  $\{\Phi ^{\vee}_{r}\}$ and GEM form $\GEM_{N}$. Write $\Xi=\sum_{r=1}^{d_{N}}x_{r}\Xi_{r}$ where 
 \[
 x_{r}=\frac{1}{\tpi} \sum_{a=1}^{g}\oint_{\Con{a}}\Phi ^{\vee}_{r}\Xi[\gamma_{a}],
 \]
 from Corollary~\ref{cor:PhiCont}. Then  \eqref{eq:KNdefn} and Proposition~\ref{prop:GEMCont}  imply
\begin{align*}
\frac{1}{\tpi}\sum_{a=1}^{g}\oint_{\Con{a}}G_{N}(\cdot,y)\Xi[\gamma_{a}]=&\frac{1}{\tpi} \sum_{a=1}^{g}\oint_{\Con{a}}\GEM_{N}(\cdot,y)\Xi[\gamma_{a}]+\sum_{r=1}^{d_{N}}x_{r}F _{r}(y)
\\
=&\,\Xi[\lambda](y)+F_{\Xi}(y),
\end{align*}
where $F_{\Xi}(y)=\sum_{r=1}^{d_{N}}x_{r}F _{r}(y)$.
\end{proof}
\begin{remark}\leavevmode
\label{rem:KNF}
\begin{enumerate}[(i)]
	\item 
	Define $F_{D}(y)=\frac{1}{\tpi}\sum_{a=1}^{g}\oint_{\Con{a}}G_{N}(\cdot,y)\Xi[\gamma_{a}]$
	for $y\in\Omo $. It follows that 
	$F_{\Xi}(y)=F_{D}(y)$ for all $y\in\D$ since $\Xi[\id]=0$. 
	
	\item 
	Note that $F_{D}|\gamma =F_{D}$ for all $\gamma\in\Gamma$ since $G_{N}(\cdot,y)$ is a $1-N$  form in $y$. Furthermore, \eqref{eq:GNF} implies that for all $\gamma\in\Gamma$ and $y\in\Omo$ 
	\begin{align*}
	F_{\Xi}(\gamma y)
	=&F_{D}(y)
	-\Xi[\lambda\gamma^{-1}](\gamma y).
	\end{align*}
	since $\lambda\gamma^{-1}(\gamma y)\in\D$. Thus 
	$	F_{\Xi}|_{\gamma}-F_{\Xi} =-\Xi[\lambda\gamma^{-1}]|_{\gamma}+\Xi[\lambda]
	=\Xi[\gamma]$ using the 1-cocycle condition \eqref{eq:cocycle}.
	Lastly, \eqref{eq:GNF}  implies that $F_{\Xi}$ satisfies   \eqref{eq:dPhiF} using   \eqref{eq:dKNPhi}.
	\item Proposition~\ref{prop:GNF} can be adapted to the case of any Kleinian group with contour integrals over appropriate boundaries of the  disconnected components of the fundamental domain $\D$ as in Remark~\ref{rem:Bersmap}.
	\item 	\eqref{eq:GNF} provides the motivation for a further generalization of the notion of a GEM form associated with a non-Kleinian uniformization of a genus two Riemann surface formed by sewing   two tori \cite{GT2}.  
\end{enumerate}
\end{remark}


\begin{thebibliography}{McIT}
\bibitem[A]{A} Ahlfors, L. 	
Some remarks on Teichm\"uller's space of Riemann surfaces,
Ann.Math. \textbf{74} (1961) 171-191.
	
\bibitem[Be1]{Be1} Bers, L. 
Inequalities for finitely generated Kleinian groups,
J.Anal.Math.  \textbf{18} (1967) 23-–41.

\bibitem[Be2]{Be2} Bers, L. 
Eichler integrals with singularities,
Acta.Math. \textbf{127} (1971) 11--22.	

\bibitem[Be3]{Be3} 
Bers, L. 
Automorphic forms for Schottky groups, 
Adv.Math. \textbf{16} (1975) 332–-361.

\bibitem[Bo]{Bo} 
Bobenko, A. 
Introduction to compact Riemann surfaces,
in  \textit{Computational Approach to Riemann Surfaces}, 
edited Bobenko, A. and Klein, C.,  Springer-Verlag, Berlin-Heidelberg (2011).

\bibitem[EO]{EO}
Eguchi, T. and Ooguri, H.
Conformal and current algebras on a general Riemann surface,
Nucl.Phys. \textbf{B282} 308--328 (1987).

\bibitem[FK]{FK} Farkas, H.M.  and Kra, I.  
\textit{Riemann Surfaces}.
Springer-Verlag, Berlin-New York (1973).

\bibitem[Fa]{Fa} Fay, J. 
\textit{Theta Functions on Riemann Surfaces}.
Lect. Notes Math. \textbf{352}, Springer-Verlag, Berlin-New York  (1973).

\bibitem[Fo]{Fo} Ford, L.R. 
\textit{Automorphic Functions}.
AMS-Chelsea, Providence  (2004).

\bibitem[G]{G}  Gardiner, F.P. 
Automorphic forms and Eichler cohomology, in 
\textit{A Crash Course on Kleinian Groups} edited by Bers, L. and Kra, I.,
Lect. Notes Math. \textbf{400} Springer, Berlin (1974).

\bibitem[GL]{GL} Gardiner, F.P.  and Lakic, N. 
\textit{Quasiconformal Teichm\"uller Theory}, 
Mathematical Surveys and Monographs  
\textbf{76},  AMS,  Providence  (1991).

\bibitem[GR]{GR} Gunning, R.C and Rossi, H. 
\textit{Analytic Functions of Several Complex Variables}.
AMS-Chelsea, Providence  (2009).

\bibitem[GT1]{GT1} 
Gilroy, T. and Tuite, M.P. 
Genus two Zhu theory for vertex operator algebras,
arXiv:1511.07664. Under revision.

\bibitem[GT2]{GT2}   Gilroy, T. and Tuite M.P.  
A meromorphic extension of Green's functions on a genus two Riemann surface formed from sewn tori. To appear.

\bibitem[Ma]{Ma} Martinec, E. 
Conformal field theory of a (super-)Riemann surface,
Nucl.Phys. \textbf{B281} (1987) 157--210.

\bibitem[McI]{McI} McIntyre, A. 
Analytic torsion and Faddeev-Popov ghosts,
SUNY PhD thesis 2002, hdl.handle.net/11209/10688. 

\bibitem[McIT]{McIT} McIntyre, A. and Takhtajan, L.A. 
Holomorphic factorization of determinants of Laplacians on Riemann surfaces and a higher genus generalization of Kronecker’s first limit formula,
GAFA, Geom. Funct. Anal.  \textbf{16} (2006) 1291--1323. 


\bibitem[Mu]{Mu} Mumford, D. 
\textit{Tata Lectures on Theta I and II}. 
Birkh\"{a}user, Boston (1983).

\bibitem[O]{O} 
Odesskii, A.
Deformations of complex structures on Riemann surfaces and integrable structures of Whitham type
hierarchies,
arXiv:1505.07779.

\bibitem[P]{P} Playle, S. 
Deforming super Riemann surfaces with gravitinos and super Schottky groups,
J.H.E.P. \textbf{12} (2016) 035.

\bibitem[Ra]{Ra}
Rauch, H.E. 
On the  transcendental moduli of algebraic Riemann surfaces,
Proc.Nat.Acad.Sc. \textbf{11} (1955) 42--48. 

\bibitem[Ro]{Ro} Roland, K.  
Beltrami differentials and ghost correlators in the Schottky parametrization, 
Phys.Lett. \textbf{B312} (1993) 441-450.

\bibitem[TW]{TW} Tuite, M.P. and Welby, M. 
General genus Zhu recursion for vertex operator algebras,
arXiv:1911.06596.
%

\end{thebibliography}
\end{document}